\titleformat{\section}[block]{\normalfont\large\bfseries}
{\thesection.}{0.8em}{}
\titleformat{\subsection}[runin]{\normalfont\bfseries}
{\thesubsection.}{0.5em}{}[.]
\titleformat{\subsubsection}[runin]{\normalfont\normalsize\itshape}
{\thesubsubsection.}{0.5em}{}[.]
\definecolor{gG}{RGB}{ 60, 186,  84}
\definecolor{gY}{RGB}{244, 194,  13}
\definecolor{gB}{RGB}{ 72, 133, 237}
\definecolor{gR}{RGB}{219,  50,  54}
\newlength\htG
\protected\def\google{\settoheight{\htG}{G}%
  \begin{tikzpicture}[yscale=-1,scale=(\htG/240pt),baseline=(baseline)]
    \fill[fill=gG] svg {m797.49 249.7h35.975v-240.75h-35.975z};
    \coordinate (baseline) at (current bounding box.south);
    \fill[fill=gB] svg {m246.11 116.18h-116.57v34.591h82.673c-4.0842 48.506-44.44 69.192-82.533 69.192-48.736 0-91.264-38.346-91.264-92.092 0-52.357 40.54-92.679 91.371-92.679 39.217 0 62.326 25 62.326 25l24.22-25.081s-31.087-34.608-87.784-34.608c-72.197-0.001-128.05 60.933-128.05 126.75 0 64.493 52.539 127.38 129.89 127.38 68.031 0 117.83-46.604 117.83-115.52 0-14.539-2.1109-22.942-2.1109-22.942z};
    \fill[fill=gR] svg {m341.6 91.129c-47.832 0-82.111 37.395-82.111 81.008 0 44.258 33.249 82.348 82.673 82.348 44.742 0 81.397-34.197 81.397-81.397 0-54.098-42.638-81.959-81.959-81.959zm0.47563 32.083c23.522 0 45.812 19.017 45.812 49.66 0 29.993-22.195 49.552-45.92 49.552-26.068 0-46.633-20.878-46.633-49.79 0-28.292 20.31-49.422 46.741-49.422z};
    \fill[fill=gY] svg {m520.18 91.129c-47.832 0-82.111 37.395-82.111 81.008 0 44.258 33.249 82.348 82.673 82.348 44.742 0 81.397-34.197 81.397-81.397 0-54.098-42.638-81.959-81.959-81.959zm0.47562 32.083c23.522 0 45.812 19.017 45.812 49.66 0 29.993-22.195 49.552-45.92 49.552-26.068 0-46.633-20.878-46.633-49.79 0-28.292 20.31-49.422 46.741-49.422z};
    \fill[fill=gB] svg {m695.34 91.215c-43.904 0-78.414 38.453-78.414 81.613 0 49.163 40.009 81.765 77.657 81.765 23.279 0 35.657-9.2405 44.796-19.847v16.106c0 28.18-17.11 45.055-42.936 45.055-24.949 0-37.463-18.551-41.812-29.078l-31.391 13.123c11.136 23.547 33.554 48.103 73.463 48.103 43.652 0 76.922-27.495 76.922-85.159v-146.77h-34.245v13.836c-10.53-11.347-24.93-18.745-44.04-18.745zm3.178 32.018c21.525 0 43.628 18.38 43.628 49.768 0 31.904-22.056 49.487-44.104 49.487-23.406 0-45.185-19.005-45.185-49.184 0-31.358 22.619-50.071 45.66-50.071z};
    \fill[fill=gR] svg {m925.89 91.02c-41.414 0-76.187 32.95-76.187 81.57 0 51.447 38.759 81.959 80.165 81.959 34.558 0 55.768-18.906 68.426-35.845l-28.235-18.787c-7.3268 11.371-19.576 22.484-40.018 22.484-22.962 0-33.52-12.574-40.061-24.754l109.52-45.444-5.6859-13.318c-10.58-26.08-35.26-47.86-67.92-47.86zm1.4268 31.413c14.923 0 25.663 7.9342 30.224 17.447l-73.139 30.57c-3.1532-23.667 19.269-48.017 42.915-48.017z};
  \end{tikzpicture}%
}
\newcounter{first}
{\end{list}}
\definecolor{dkgreen}{rgb}{0.1,0.4,0.0}
\definecolor{dkblue}{rgb}{0,0.1,0.8}
\definecolor{dkred}{rgb}{1,0,0}
\def\define{\ensuremath{\overset{\operatorname{\scriptscriptstyle def}}=}}
\theoremstyle{plain}
\newtheorem{theorem}[subsection]{Theorem}
\newtheorem{TheoM}{Theorem}
\newtheorem*{ShapleyThm*}{Shapley's Theorem \cite{Weber-robabilistic-values-for-games}}
\newtheorem{lemma}[subsection]{Lemma}
\newtheorem{proposition}[subsection]{Proposition}
\theoremstyle{definition}
\newtheorem{definition}[subsection]{Definition}
\theoremstyle{remark}
\newtheorem{remark}[subsection]{Remark}
\newtheorem{example}[subsection]{Example}
\newtheorem*{conjecture*}{Conjecture}
\newcommand{\doi}[1]
{\texttt{\href{http://dx.doi.org/#1}{\nolinkurl{doi:#1}}}}
\newcommand{\web}[1]
{\texttt{\href{#1}{\nolinkurl{#1}}}}
\title{Probabilistic values for simplicial complexes}
\author{Ivan Martino}
\date{\today}
\begin{document}

\def\rank#1{\ensuremath{\operatorname{rk} #1}}
\def\face{\ensuremath{\operatorname{\mathcal F} (\Delta)}}

\def\FacetsD{\ensuremath{\operatorname{Ft} \Delta}}	
\def\Facets#1{\ensuremath{\operatorname{Ft} #1}}
\def\Facet#1#2{\ensuremath{\operatorname{Ft}_{#2} #1}}

\def\closure#1#2{\ensuremath{\operatorname{cl}_{#1} (#2)}}
\def\closureDelta#1{\ensuremath{\operatorname{cl}_{\Delta} (#1)}}

\def\Star#1#2{\ensuremath{\operatorname{St}_{#2} #1 }}
\def\StarNoD#1{\ensuremath{\operatorname{St} #1 }}
\def\StarD#1{\ensuremath{\operatorname{St}_{\Delta} #1 }}

\def\Link#1#2{\ensuremath{\operatorname{Lk}_{#2} #1 }}
\def\LinkNoD#1{\ensuremath{\operatorname{Lk} #1 }}
\def\LinkD#1{\ensuremath{\operatorname{Lk}_{\Delta} #1 }}

\def\ffD{\ensuremath{\operatorname{\textbf{f}}(\Delta)}}
\def\ff#1{\ensuremath{\operatorname{\textbf{f}}(#1)}}
\def\ffi#1#2{\ensuremath{\operatorname{f}_{#1}(#2)}}

\def\charFun{\ensuremath{\operatorname{\mathbb{R}_{\Delta}} }}

\def\vtot{\ensuremath{\operatorname{v}_{\Delta}}}
\def\vtotof#1{\ensuremath{\operatorname{v}_{#1}}}

\def\Coalition#1#2{\ensuremath{\operatorname{Coalitions}_#2 #1 }}
\def\subgroupD{\ensuremath{\pi(\Delta)}}

\begin{abstract}
In this manuscript, we define and study probabilistic values for cooperative games on simplicial complexes.
Inspired by the work of Weber \cite{Weber-robabilistic-values-for-games}, we establish the new theory step by step, following the classical axiomatization, i.e. using the \emph{linearity} axiom, the \emph{dummy} axiom, etc.

Furthermore, we define Shapley values on simplicial complexes generalizing the classical notion in literature. Remarkably, the traditional axiomatization of Shapley values can be extended to this general setting for a rather interesting class of complexes that generalize, in certain instances, the notions of \emph{vertex-transitive} graphs \cite{MR3848666, MR0460172, MR941942} and \emph{vertex-homogeneous} simplicial complexes \cite{MR1373690, MR779890, MR1871694, MR1809429, MR1710494}. These combinatorial objects are very popular in the literature because of the study of Evasiveness Conjecture in Complexity Theory \cite{MR3848666, MR0460172, MR941942, MR1871694, MR1809429}. 
\end{abstract}

\maketitle

A cooperative game on $[n]\define \{1, \dots, n\}$ is a \emph{characteristic} function defined on all subsets of $[n]$, $v: 2^n \rightarrow \mathbb{R}$ with the restriction $v(\emptyset)=0$.
In other words, $n$ is the number of \emph{players} and $v(T)$ provides the \emph{worth} of every \emph{coalition} $T\subseteq [n]$.
It is a very interesting problem to determine an equitable distributions of the payoff of the grand coalition $v([n])$, see for instances \cite{Shapley-a-value, Shapley-core-convex, Weber-robabilistic-values-for-games}.

In the last decade, several research articles have considered cooperative games on different combinatorial structures \cite{Shapley-matroids-static, Shapley-matroids-dynamic, MR3886659, MR2847360, MR2825616, MR1436577, MR1707975}.
%
Inspired by these, the author has defined and studied cooperative games on \emph{simplicial complexes} \cite{Martino-cooperative} and he has developed the concept of efficiency for this new class of games \cite{Martino-Efficiency}.

In this manuscript, we define probabilistic values for cooperative games on simplicial complexes. Influenced by the work of Weber \cite{Weber-robabilistic-values-for-games}, we establish the new theory step by step, following the classical axiomatization, i.e. using the \emph{linearity} axiom, the \emph{dummy} axiom, etc.
Further, we define Shapley values on simplicial complexes generalizing the classical notion in literature. 

Remarkably, Shapley values can be only characterized for a rather interesting class of complexes that generalize, in specific instances, the notions of \emph{vertex-transitive} graphs \cite{MR3848666, MR0460172, MR941942} and \emph{vertex-homogeneous} simplicial complexes \cite{MR1373690, MR779890, MR1871694, MR1809429, MR1710494}. These combinatorial objects are very important in the literature because of the study of Evasiveness Conjecture in Complexity Theory \cite{MR3848666, MR0460172, MR941942, MR1871694, MR1809429}. 

Before proceeding with a detailed presentation of the new results, we are going to introduce cooperative games on simplicial complexes and, right after, we will provide several motivation for this work.

\subsection*{Games on simplicial complexes}
Playing the game in the traditional setting, we assume that each player may join every coalition. The main motivation for the generalization is that this may not be allowed: there could be certain coalitions forbidden for very specific reasons. Once this is on the ground, one could also argue that if the coalition $T$ is not allowed, then every other coalition containing it will not be accepted too. 

Simplicial complexes are combinatorial and topological objects that serve as coalition set for these games because they respect the above consideration. Specifically, a simplicial complex is a family $\Delta$ of subsets of $[n]$ closed under inclusion, that is if $X\in \Delta$, then every subset $Y\subseteq X$ belongs also to $\Delta$. 
There are several familiar examples: graphs, the boundary of simplicial polytopes, and the full power set $2^{n}$ are simplicial complex. In the latter case, $\Delta=2^{n}$ is called a $(n-1)$-dimensional simplex.

\noindent
A cooperative game on $\Delta$ is defined by a characteristic function $v$:
\[
	v: \Delta \rightarrow \mathbb{R}
\]
with the usual constrain that $v(\emptyset)=0$.
Employing this notation, the game $([n], v)$ can be seen as the cooperative game on the $(n-1)$-dimensional simplex $(2^n, v)$, with the same characteristic function $v$.
So these cooperative games are the ones where every coalition is allowed.

As in the traditional case, the individual function $\phi_i(v)$ measures the additional value that the playes $i$ provides to a feasible coalition during the cooperative game $(\Delta, v)$.
The focus of this work is the study of the individual values for the cooperative games where \emph{feasible} coalition $T$ are defined to be elements of $\Delta$.
%

\vspace{0.1cm}

It is worth to mention a few reasons why the generalization provided is relevant.

\subsection*{Probabilistic values for simplicial complexes}
Matroids are at the intersection of Algebra, Combinatorics, Geometry, and Topology. Since their introduction \cite{Whi35}, new variations have appeared in literature encoding different type of independence \cite{MR2989987, MR2989987, MR1164708, MR1433646,MR2844079, MR3944531, Martino2018, Borzi-Martino-D-matroids, MR3883211, Borzi-Martino-DVR-matroids}.

Several authors have already taken in consideration cooperative games on matroids
\cite{Shapley-matroids-static, Shapley-matroids-dynamic, MR3886659, MR2847360, MR2825616, MR1436577, MR1707975}. Matroids are simplicial complexes fulfilling the \emph{base exchange property}, see for instance \cite{Stanley2012b, Stanley1996a,  MR782306, Oxley}. As shown in Figure \ref{fig:totale}, not every pure simplicial complex is a matroid: Figure \ref{fig:figure-matroid-yes} and \ref{fig:figure-matroid-no} show two pure simplicial complexes and only the one in the left is a matroid.

The author has introduces cooperative games on simplicial complexes in \cite{Martino-cooperative} and studied quasi-probabilistic values introduced in \cite{Shapley-matroids-static}.
Inspired by \cite{Weber-robabilistic-values-for-games}, in this manuscript we develop the theory for probabilistic values and in Section \ref{sec:linearity} and Section \ref{sec:dummy} we extend the results of Weber \cite[Section 3 and 4]{Weber-robabilistic-values-for-games}, by borrowing (from Combinatorial Topology) the notion of link of a vertex.

\begin{figure}
\centering
\begin{subfigure}[b]{5cm}
\begin{tikzpicture}[scale=0.70]
   	\coordinate (F) at ( -2.0,  1.0);
  	\coordinate (S) at (-3.0, -2.0); 
	\coordinate (FB) at ( 0.0, -0.0); 
	\coordinate (TV) at (2.0, 1.0); 
	\coordinate (E) at (3.0, -2.0);
	 
   \node [left] at ( -2.0,  1.0) {1}; 
   \node [left] at (-3.0, -2.0) {2}; 
	\node [above] at ( 0.0, -0.0) {3}; 
   \node [right] at (2.0, 1.0) {4}; 
   \node [right] at (3.0, -2.0) {5};

   \filldraw [draw=black, fill=red!20, line width=1.5pt] (FB)--(TV)--(E) -- (FB);
   
   \filldraw [draw=black, fill=green!20, line width=1.5pt] (FB)--(S)--(E) -- (FB);

\filldraw [draw=black, fill=blue!20, line width=1.5pt] (FB)--(F)--(S) -- (FB); 
   
   
\end{tikzpicture}
        \subcaption{This simplicial complex is a matroid.}
        \label{fig:figure-matroid-yes}
    \end{subfigure}
~\hspace{0.5cm}
	\begin{subfigure}[b]{5cm}
\begin{tikzpicture}[scale=0.75]
   	\coordinate (F) at ( -2.0,  1.0);
  	\coordinate (S) at (-3.0, -2.0); 
	\coordinate (FB) at ( 0.0, -0.0); 
	\coordinate (TV) at (2.0, 1.0); 
	\coordinate (E) at (3.0, -2.0);
	 
   \node [left] at ( -2.0,  1.0) {1}; 
   \node [left] at (-3.0, -2.0) {2}; 
	\node [above] at ( 0.0, -0.0) {3}; 
   \node [right] at (2.0, 1.0) {4}; 
   \node [right] at (3.0, -2.0) {5};

   \filldraw [draw=black, fill=red!20, line width=1.5pt] (FB)--(TV)--(E) -- (FB);

   \filldraw [draw=black, fill=blue!20, line width=1.5pt] (FB)--(S)--(F) -- (FB);
   
\end{tikzpicture}
        \subcaption{This simplicial complex is not a matroid.}
        \label{fig:figure-matroid-no}
	\end{subfigure}

\caption{Two examples of pure simplicial complexes.}\label{fig:totale}
\end{figure}
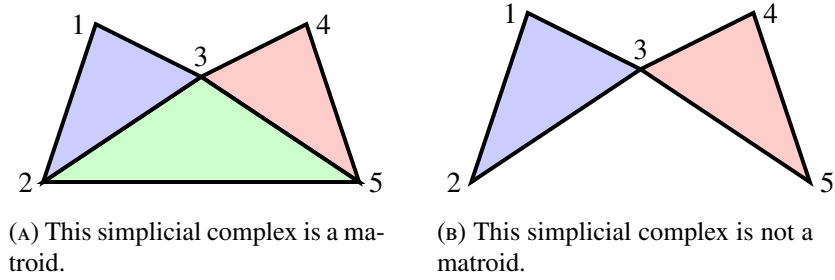

\subsection*{Shapley values}
As anticipated, one of our goal is to define Shapley values for simplicial complexes.
A first tentative for this can be track back to Bilbao, Driessen, Jim\'{e}nez Losada and Lebr\'{o}n \cite{Shapley-matroids-static} for cooperative game on matroids. They study individual values that can be written as weighted sum of classical Shapley values. The author has shown that this idea can be generalized to \emph{pure} simplicial complexes (i.e. every maximal element has the same cardinality) but not further, see Theorem 5.2 of \cite{Martino-cooperative}.

This restriction is kind of unwelcome, because Shapley values are tremendous objects for this theory. Thus, in this work, we provide a natural definition of Shapley values for every simplicial complex, including the not pure ones, see Section \ref{sec:Shapley}.
On the other hand, the elegant characterization, seen in Section 9 of \cite{Weber-robabilistic-values-for-games}, holds only for a specific class of complexes having strong symmetry. 

\subsection*{Players with different strategies}
The request of symmetry for $\Delta$ translates to players having the same strategy constrains.
The combinatorial and topological side of this phenomenon will be exposed in the next paragraph, but here we highlight an observation relevant for the theory of these games.
%
In the traditional setting, players may have different \emph{point of view} and \emph{attitude to risk}. In this new setting, players might also have different \emph{strategy constrains}: since certain coalitions are not allowed, different players could have completely different strategy to the game. For instance, in the example in Figure \ref{fig:figure-matroid-no}, the player $3$ is the only one who may join two different coalitions of cardinality three, the largest coalition possible for this game.
%



\subsection*{The Shapley complex}
The Shapley values are characterized axiomatically only if the link of every vertex of the simplicial complex has the same $f$-vector, say $\textbf{s}$; we call this complexes, $\textbf{s}$-Shapley, see Section \ref{sec:Shapley}.
This definition generalizes, in specific instances, the notion of \emph{vertex-homogeneous} complexes, that is a simplicial complex $\Delta$ such that every pair of vertices $i, j\in \Delta$, there exists an automorphism $g\in Aut(\Delta)$ such that $g(i) = j$, see more in Section 6 of \cite{MR2872540}.

We are not going to use this notion, but we want to remark that vertex-homogeneous simplicial complexes are extremely useful in the study of non-evasiveness, used in Combinatorics and combinatorial Geometry, and motivated by questions in Graph Theory \cite{MR1373690, MR779890, MR1871694, MR1809429, MR1710494}.
A very nice and well written introduction about simplicial complexes and the evasiveness conjecture can be found here \cite{MR3848666}.

\subsection*{Machine-Learning prediction models and Multi-Touching Attribution}
A ground-breaking application of the Shapley value methodology is in the interpretability of the Machine-Learning prediction models \cite{NIPS2017_7062, Ribeiro, Erik-Igor, Shrikumar, 7546525, OnPixel, Lipovetsky}.

Last but not the least, another very important application is the multi-touch attribution system in Marketing, for instance offered in marketing platform of \google. 
A prototype example of the multi-touch attribution system in Marketing on simplicial complexes is provided in the Introduction of \cite{Martino-cooperative}.

\section*{Presentation of the new results}

Our first observation is that the set of coalition, $\Coalition{i}{\Delta}$, the player $i$ can join is actually a simplicial complex it-self. Indeed, we show in Proposition \ref{prop:coalition-link} that
\begin{equation*}
		\Coalition{i}{\Delta}=\Link{i}{\Delta},
\end{equation*}	
the link of the vertex $i$ in $\Delta$, see Definition \ref{def:link}.

Then, we start our axiomatization of the individual values. 
Let us denote by $\charFun$ the vector space of cooperative games on the simplicial complex $\Delta$, i.e. the set of all characteristic functions under the constrain $v(\emptyset)=0$.
If $c$ is a real number, then the re-scaled game $(n, c\cdot v)$, given by the characteristic function $(c\cdot v)(T)=cv(T)$ for every $T\in \Delta$, should provide a re-scaled (by the same constant) player assessments, and so a re-scaled individual values $\phi_i(c\cdot v)=c\phi_i(v)$. 

\begin{description}
	\item [We assume] the individual value $\phi_i$ is defined in a cone $\mathfrak{I}$ of cooperative games in $\charFun$.
\end{description}

\noindent
The first requirement is that the $\phi_i$ are linear functions.

\begin{description}
	\item [Linearity Axiom] 
	The individual value $\phi_i(v)$ of the player $i$ for the cooperative game $(\Delta, v)$ is linear on $\mathfrak{I}$.
\end{description}

\noindent
It seems reasonable to think that the individual value $\phi_i(v)$ of the player $i$ only depends on the payoff  $v(T)$ and $v(T\cup i)$ for every set $T$ in the coalition set $\Coalition{i}{\Delta}$. 
This is precisely the content of our first statement.
To appreciate the results, let us remember that the \emph{star} of a player $i$ is, as a set, $\Star{i}{\Delta}\define \{T, T\cup i: T\in \Coalition{i}{\Delta}\}$; this will be carefully defined in Definition \ref{def:star}.

\begin{TheoM}
	%
	Assume that the cone of cooperative games $\mathfrak{I}$ contains the carrier games $\mathcal{C}$ and $\hat{\mathcal{C}}$ for every $T\in\Delta$ but $T\cup i\notin \Delta$.
	%
	If the individual value $\phi_i$ fulfills the linearity axiom, 
	then, $\phi_i(v)$ only depends on the value of $v$ on the Star of $i$ in $\Delta$, $\Star{i}{\Delta}$, that is
	\begin{equation*}
	\phi_i(v)=\sum_{T\in \Star{i}{\Delta}} a_T v(T).
\end{equation*}
The set of coefficients $\{a_T\}$ is unique.
\end{TheoM}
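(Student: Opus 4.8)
The plan is to separate the assertion into two classical pieces: a representation coming from pure linear algebra, and a vanishing of certain coefficients coming from the carrier games. First I would record that $\charFun$ is a finite-dimensional real vector space whose coordinate functionals are the evaluations $v \mapsto v(S)$, $S \in \Delta\setminus\{\emptyset\}$, dual to the indicator games $e_S$ (where $e_S(S') = 1$ if $S'=S$ and $0$ otherwise). Because $\mathfrak{I}$ is a cone on which $\phi_i$ is additive and homogeneous, $\phi_i$ is the restriction of a linear functional on $\operatorname{span}(\mathfrak{I})$; taking $\mathfrak{I}$ to be full-dimensional (as the uniqueness claim requires) this yields reals $a_S = \phi_i(e_S)$ with $\phi_i(v) = \sum_{S\in\Delta\setminus\{\emptyset\}} a_S\, v(S)$ for every $v\in\mathfrak{I}$. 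At this stage the sum ranges over all of $\Delta$, and the entire content of the theorem is to show that $a_S=0$ off the Star.

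Next I would identify the off-Star faces. Unwinding the definitions of $\Link{i}{\Delta}$ and $\Star{i}{\Delta}$, a face $S\in\Delta$ lies outside $\Star{i}{\Delta}$ exactly when $i\notin S$ and $S\cup i\notin\Delta$; these are precisely the sets $T$ named in the hypothesis, and they are automatically $i$-free. Fixing such a $T$, I would feed the two carrier games $\mathcal{C}$ and $\hat{\mathcal{C}}$ attached to $T$ — both members of $\mathfrak{I}$ by assumption — into the representation above. By their construction these two games agree on every face of $\Star{i}{\Delta}$ and their difference is supported at the single face $T$, while the carrier condition (player $i$ lies outside the carrier, since $T\cup i\notin\Delta$) forces $\phi_i(\mathcal{C})=\phi_i(\hat{\mathcal{C}})$. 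Linearity then collapses everything except the $T$-coordinate, giving $0=\phi_i(\mathcal{C})-\phi_i(\hat{\mathcal{C}})=a_T\bigl(\mathcal{C}(T)-\hat{\mathcal{C}}(T)\bigr)$, and since $\mathcal{C}(T)\neq\hat{\mathcal{C}}(T)$ we conclude $a_T=0$.

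Running this over every off-Star $T$ deletes all off-Star coordinates, leaving $\phi_i(v)=\sum_{T\in\Star{i}{\Delta}} a_T\, v(T)$. Uniqueness is then immediate from the first paragraph: the coordinate functionals $\{v\mapsto v(T)\}_{T\in\Star{i}{\Delta}}$ are linearly independent and $\mathfrak{I}$ spans $\charFun$, so the coefficients are forced, indeed $a_T=\phi_i(e_T)$. I expect the genuine work to be entirely in the middle step — verifying from the definitions of $\mathcal{C}$ and $\hat{\mathcal{C}}$ both that their difference is supported at the single face $T$ and that $\phi_i$ cannot tell them apart, i.e. that the obstruction $T\cup i\notin\Delta$ really does render $v(T)$ invisible to player $i$. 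The surrounding representation and uniqueness are then routine dual-basis bookkeeping.
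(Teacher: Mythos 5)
Your architecture coincides with the paper's own proof: linearity yields the representation $\phi_i(v)=\sum_{S\in\Delta}a_S\,v(S)$ with unique coefficients; the difference of the two carrier games attached to an off-star face $T$ is the indicator game of $T$; and the goal is to show $a_T=0$. The outer steps (the dual-basis bookkeeping, the identification of the off-star faces as those $T$ with $i\notin T$ and $T\cup i\notin\Delta$, the uniqueness remark) are fine. The gap is exactly in the step you yourself flag as ``the genuine work'': you assert that the carrier condition \emph{forces} $\phi_i(v_T)=\phi_i(\hat v_T)$, but you never derive it, and it cannot be derived from the stated hypotheses. Once the representation is in place, $\phi_i(v_T)-\phi_i(\hat v_T)=\sum_{S}a_S\bigl(v_T(S)-\hat v_T(S)\bigr)=a_T$, so the equality you invoke \emph{is} the conclusion $a_T=0$; invoking it is circular. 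Moreover, linearity genuinely does not suffice: whenever $\Delta$ has a face $T_0$ with $T_0\cup i\notin\Delta$ (the only case in which there is anything to prove), the functional $\psi(v)\define v(T_0)$, defined on the cone $\mathfrak{I}=\charFun$, is linear and its domain contains every carrier game, so it satisfies all hypotheses of the theorem, yet its unique representation has $a_{T_0}=1\neq 0$. The heuristic you appeal to --- player $i$ lies outside the carrier, hence cannot tell the two games apart --- is precisely the dummy-player property (indeed $i$ is a dummy in both $v_T$ and $\hat v_T$, and $v_T(i)=\hat v_T(i)=0$ for off-star $T$), and that axiom is introduced only in the next section; it is not among the hypotheses here.

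For comparison, the paper's own proof closes this very step by writing $\phi_i(v_T)=v_T(i)=0$ and $\phi_i(\hat v_T)=\mathbbm{1}_{T\cup i}v(T\cup i)=0$, which are dummy-axiom identities rather than consequences of linearity; so your reconstruction is faithful to the paper's argument, gap included. But as written, neither your argument nor the paper's derives the vanishing of the off-star coefficients from the linearity axiom plus the carrier-game hypothesis alone. To make the argument rigorous one must strengthen the hypotheses --- e.g.\ add the dummy-player axiom, or explicitly assume $\phi_i(v_T)=\phi_i(\hat v_T)$ for every $T\in\Delta$ with $T\cup i\notin\Delta$ --- after which your computation $a_T=\phi_i(v_T)-\phi_i(\hat v_T)=0$ goes through verbatim.
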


Assume that a player does contribute the same specific amount $v(i)$, the number of payoff s/he can reach by himself, to every coalition: such player is traditionally called \emph{dummy}, because s/he has no strategy, not matter what coalition s/he joins.
Then, the next axiom imposes that her/his retribution should be precisely $v(i)$.

\begin{description}
	\item [Dummy Player Axiom] 
If a player $i$ is dummy for the cooperative game $(\Delta, v)$, then its individual value $\phi_i(v)$ is $v({i})$.
\end{description}

\noindent
This axiom gives the very traditional formula expressing $\phi_i(v)$ in terms of the marginal contributions $\left(v(T\cup i) - v(T)\right)$:

\begin{TheoM}
	Assume that the cone of cooperative games $\mathfrak{I}$ contains the carrier games $\mathcal{C}$ for $\Link{i}{\Delta}$.
	If the individual value $\phi_i$ can be written as in equation \eqref{eq:linearity-axiom} and satisfies the dummy player axiom, then
	there exist real numbers 
	$\{p_T\}_{T\in \Link{i}{S}}$ 
	such that
	$$\sum_{T\in \Link{i}{\Delta}} p_T=1$$
	and for every $v$ in $\mathcal{I}$
	\begin{equation}\label{eq:012-phi-p_T-introduzione}\tag{$\square$}
		\phi_i(v)=\sum_{T\in \Link{i}{\Delta}}p_T \left(v(T\cup i) - v(T)\right).
	\end{equation}	
\end{TheoM}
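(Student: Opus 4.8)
The plan is to start from the linear representation furnished by the previous theorem,
\[
\phi_i(v)=\sum_{T\in \Star{i}{\Delta}} a_T v(T),
\]
and to reorganise the sum according to whether a coalition contains the player $i$. By Proposition~\ref{prop:coalition-link} we have $\Coalition{i}{\Delta}=\Link{i}{\Delta}$, and by the very definition of the star each element of $\Star{i}{\Delta}$ is either of the form $T$ with $T\in\Link{i}{\Delta}$ (the coalitions avoiding $i$) or of the form $T\cup i$ with $T\in\Link{i}{\Delta}$ (the coalitions using $i$). Abbreviating $L\define\Link{i}{\Delta}$, this rewrites the representation as
\[
\phi_i(v)=\sum_{T\in L} a_{T\cup i}\,v(T\cup i)+\sum_{T\in L} a_T\,v(T),
\]
so that the whole problem reduces to determining the two families of coefficients $\{a_{T\cup i}\}$ and $\{a_T\}$ from the dummy axiom.

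Next I would exploit the dummy games supplied by the hypothesis. Recall that $i$ is dummy for $(\Delta,v)$ exactly when $v(T\cup i)-v(T)=v(i)$ holds for every $T\in L$, and that for such a game the dummy axiom asserts $\phi_i(v)=v(i)$. The assumption that the cone $\mathfrak{I}$ contains the carrier games $\mathcal{C}$ for $\Link{i}{\Delta}$ guarantees that a sufficient supply of such dummy games actually lies in $\mathfrak{I}$, so that the identity $\phi_i(v)=v(i)$ may be tested against each of them. Feeding the carrier game that is dummy with $v(i)=1$ and vanishing on every coalition avoiding $i$ into the displayed representation produces the single relation $\sum_{T\in L} a_{T\cup i}=1$. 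Feeding, for each nonempty $S\in L$, the carrier games dummy at level $v(i)=0$ and concentrated above $S$ produces a triangular (unanimity-type) system which, once inverted over the poset $L$, yields $a_S+a_{S\cup i}=0$ for every nonempty $S\in L$.

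With these relations in hand I would set $p_T\define a_{T\cup i}$ for $T\in L$. The first relation reads $\sum_{T\in L} p_T=1$, and the second reads $a_T=-p_T$ for every nonempty $T\in L$. Substituting back into the general representation---now valid for an arbitrary $v$, not merely the dummy ones---and using the normalisation $v(\emptyset)=0$ to discard the lone unconstrained term $a_{\emptyset}v(\emptyset)$, the two sums recombine into
\[
\phi_i(v)=\sum_{T\in L} p_T\,v(T\cup i)-\sum_{T\in L} p_T\,v(T)=\sum_{T\in L} p_T\bigl(v(T\cup i)-v(T)\bigr),
\]
which is precisely the formula \eqref{eq:012-phi-p_T-introduzione}, with the stated normalisation $\sum_{T\in L}p_T=1$.

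The step I expect to be the main obstacle is the passage from the dummy axiom to the coefficient relations. One must make sure that the carrier games named in the hypothesis are genuinely dummy for $i$ and genuinely lie in the cone $\mathfrak{I}$---since $\mathfrak{I}$ need not be closed under subtraction, only these prescribed games, and not arbitrary differences of them, may legitimately serve as test inputs---and then that the resulting system can be solved, which is where the triangular (Möbius) inversion over $L$ enters. A small but easily overlooked point is that the coefficient $a_{\emptyset}$ is never pinned down by the dummy axiom; it is harmless solely because it multiplies $v(\emptyset)=0$, and it is exactly this that lets the final sum over the nonempty faces of $L$ be extended to all of $L$ at no cost.
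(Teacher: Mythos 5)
Your proposal is correct and follows essentially the same route as the paper: both split the linear representation over $\Star{i}{\Delta}$ into the pairs $T$, $T\cup i$ with $T\in\Link{i}{\Delta}$, test the dummy axiom against the carrier games $v_T$ (using $v_{\{i\}}$ to obtain the normalisation $\sum_{T} p_T=1$), and deduce $a_T+a_{T\cup i}=0$ for every nonempty $T$ before setting $p_T\define a_{T\cup i}$. Your ``triangular system inverted over the poset'' is precisely the paper's downward induction on co-rank starting from the facets of $\Link{i}{\Delta}$, so the two arguments coincide in substance.
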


Next, we want to deal with monotone functions: $v:\Delta \rightarrow \mathbb{R}$ such that for every $S\subseteq T\in \Delta$, then $v(S)\leq v(T)$. 
If $v$ is a monotone function, then every marginal contribution is positive, because $v(T\cup i)\geq v(T)$. Hence, every player will require a positive retribution. 

\begin{description}
	\item [Monotonicity Axiom] 
	If $v$ is a monotone game, then $\phi_i(v)\geq 0$ for every player $i$.
\end{description}

Using Theorem \ref{thm:la+da} or, more generically, if the individual value can be written as in equation \eqref{eq:012-phi-p_T-introduzione}, one can immediately show that the real constants $\{p_T\}_{T\in \Link{i}{\Delta}}$ are actually a probability distribution on the elements of $\Link{i}{\Delta}$. 

\begin{TheoM}
	Assume that the cone of cooperative games $\mathfrak{I}$ contains the carrier games $\mathcal{C}$ and $\hat{\mathcal{C}}$ for $\Star{i}{\Delta}$.
	We assume that the individual value $\phi_i$ can written as in equation \eqref{eq:012-phi-p_T-introduzione}, that $\sum_{T\in \Link{i}{\Delta}} p_T=1$, and that it
	fulfills 
	the monotonicity axiom.
	Then, the constants $\{p_T\}_{T\in \Link{i}{\Delta}}$ are a probability distribution.
\end{TheoM}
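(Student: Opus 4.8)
The plan is to observe that, since the relation $\sum_{T\in \Link{i}{\Delta}} p_T = 1$ is already among the hypotheses, the only thing left to establish is that each coefficient is nonnegative, i.e.\ $p_T \geq 0$ for every $T\in \Link{i}{\Delta}$; together these two facts say exactly that $\{p_T\}$ is a probability distribution. The strategy for the sign is the classical one: for each fixed face I would feed the formula \eqref{eq:012-phi-p_T-introduzione} a single, carefully chosen \emph{monotone} game whose $i$-marginal contributions isolate that one coefficient, and then let the monotonicity axiom do the work.

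Concretely, fix $T_0\in \Link{i}{\Delta}$ and consider the game
\[
	v_{T_0}(U)=\begin{cases} 1 & \text{if } T_0\subsetneq U,\\ 0 & \text{otherwise,}\end{cases}
\]
for $U\in\Delta$. This $v_{T_0}$ is monotone and is the strict-containment carrier game at $T_0$, built from the carrier games $\mathcal{C}$ and $\hat{\mathcal{C}}$ associated to $\Star{i}{\Delta}$; hence it lies in the cone $\mathfrak{I}$ by the standing hypothesis, which is precisely the role that assumption plays, since membership in $\mathfrak{I}$ is what lets me apply \emph{both} \eqref{eq:012-phi-p_T-introduzione} and the monotonicity axiom to $v_{T_0}$. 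Note also $v_{T_0}(\emptyset)=0$, so it is a legitimate characteristic function, even in the edge case $T_0=\emptyset$.

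The key computation is the evaluation of the $i$-marginal contributions of $v_{T_0}$ along the link. For $T\in \Link{i}{\Delta}$ we have $i\notin T$, so $T_0\subseteq T\cup i$ is equivalent to $T_0\subseteq T$, and the containment $T_0\subsetneq T\cup i$ is automatic once $T_0\subseteq T$ (because $i\in T\cup i$ while $i\notin T_0$); hence $v_{T_0}(T\cup i)=\mathbbm{1}[T_0\subseteq T]$ whereas $v_{T_0}(T)=\mathbbm{1}[T_0\subsetneq T]$. Subtracting, the marginal $v_{T_0}(T\cup i)-v_{T_0}(T)$ collapses to $\mathbbm{1}[T_0=T]$. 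Substituting into \eqref{eq:012-phi-p_T-introduzione} annihilates every term except the one indexed by $T_0$, giving $\phi_i(v_{T_0})=p_{T_0}$. On the other hand, monotonicity of $v_{T_0}$ forces $\phi_i(v_{T_0})\geq 0$, and therefore $p_{T_0}\geq 0$. Since $T_0$ was arbitrary, every coefficient is nonnegative, and combined with $\sum p_T=1$ this is the desired conclusion.

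I expect the genuinely delicate point to be the choice of test game rather than any subsequent arithmetic. The naive candidate, the ordinary (non-strict) carrier game $\mathbbm{1}[T_0\subseteq U]$, is useless here: since $i\notin T_0$, adjoining $i$ to a face never changes whether $T_0$ is contained, so all of its $i$-marginals vanish and $\phi_i$ detects nothing. One might instead try to work with games whose $\phi_i$-value is the upper sum $\sum_{T\supseteq T_0}p_T$, but nonnegativity of all such sums does \emph{not} imply nonnegativity of the individual $p_{T_0}$, since a negative coefficient can be masked by larger faces lying above it. This is exactly why the strict-containment game $\mathbbm{1}[T_0\subsetneq U]$ is the right object: it is the unique modification producing the Kronecker-delta marginal, and checking both that it is monotone and that it lies in $\mathfrak{I}$ — the latter being the reason the hypothesis supplies both families $\mathcal{C}$ and $\hat{\mathcal{C}}$ rather than one — is the only real content of the argument.
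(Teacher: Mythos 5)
Your proof is correct and follows exactly the paper's argument: the paper likewise tests $\phi_i$ against the strict-containment carrier games $\hat{v}_{T}\in\hat{\mathcal{C}}$, which are monotone, and concludes $p_T=\phi_i(\hat{v}_T)\geq 0$. Your write-up simply makes explicit the Kronecker-delta computation of the marginal contributions that the paper leaves implicit.
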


With these results in mind, a \textbf{probabilistic value} defined on the cone $\mathfrak{I}$ of cooperative games on a simplicial complex $\Delta$ is the linear function $\phi_i$ written in the form $\eqref{eq:012-phi-p_T-introduzione}$ such that $\{p_T\}_{T\in \Link{i}{\Delta}}$ is a probability distribution on the elements of link of the vertex $i$.

In \cite{Martino-cooperative}, with a different probabilistic flavor, the author has shown similar results but distinct from the one in Theorem \ref{thm:linearity-axiom-step}, \ref{thm:la+da} and \ref{thm:la+da+ma}. Aside that the style of proof differs and that the tools used are diverse (for instance, in \cite{Martino-cooperative} the author borrow the $\lambda_i$-dummy axiom from \cite{Shapley-matroids-static}), the main goal of \cite{Martino-cooperative} is studying when a \emph{quasi}-probabilistic value can be written as sum of traditional Shapley values.

\vspace{0.2cm}
Up to this achievement, everything somehow follows the path shown in \cite{Weber-robabilistic-values-for-games}. 
%
%
The two theories parts introducing the symmetric axiom. 
We start by recalling that a \textbf{group value} $\operatorname{\phi}$ for the cooperative game $(\Delta, v)$ is the vector $(\phi_1(v), \dots, \phi_n(v))$, collecting all individual player values.

We denote by $\operatorname{Symm}(\Delta)$ the largest subgroup of the symmetric group $S_n$ such that if $T\in \Delta$ and if $g\in \operatorname{Symm}(\Delta)$, then $\pi T\in \Delta$.
Moreover, $\pi \cdot v$ stands for the permuted characteristic function; this is $(\pi \cdot v): \Delta \rightarrow \mathbb{R}$ such that
\[
	\pi \cdot v(T)=	v(\pi T).
\]

\begin{description}
	\item [Symmetry Axiom] 
	Assume that for every permutation $\pi$ in $\operatorname{Symm}(\Delta)$ we have that $\pi \cdot v$ belongs to $\mathfrak{I}$.
	Let $\phi$ be a group value for the cooperative game $(\Delta, v)$.
	Then, $\phi_i(v)=\phi_{\pi i}(\pi \cdot v)$, for every $\pi$ in $\operatorname{Symm}(\Delta)$.
\end{description}

\noindent
Some of the symmetries in $S_{n}$ have a key role in the theory. We define $\pi_{L,T}^i$ to the permutation that switches two set $L$ and $T$ of the same cardinality in $\Link{i}{\Delta}$ and fixes $i$. We denote by $\pi_{i,j}$ the simple transposition $(i,j)$.
In Definition \ref{def:subgroup}, we denote by $\subgroupD$ the subgroup of $S_n$ generated by the permutation $\pi_{L,T}^i$ for all $i$ and for all $L,T\in\Link{i}{\Delta}$ with $|L|=|T|$ and generated by every transposition $\pi_{i,j}$ provided that $\Link{i}{\Delta}\cap \Link{j}{\Delta}\neq \emptyset$.

In our first analysis in Theorem \ref{thm:s.a.}, we are able to show that the symmetric axiom implies that the probability $p_T$ for every non-trivial coalition $T$ actually depends only on the cardinality of $T$. 
On the other hand, in Section \ref{sec:restriction-for-equidim-links} and Section \ref{sec:bit-system-f-vector}, we explain that if we assume the symmetric axiom, then without loss of generality we may work with simplicial complexes with pure links $\Link{i}{\Delta}$ or rank $r-1$. We shortly call these complexes as \emph{simplicial complex with pure links}, see Definition \ref{def:pure-links-simplicial-complex}.

To present the next result it is necessary the notion of $f$-vector of a simplicial complex: this is the vector of integers $\ffD=(\ffi{-1}{\Delta}, \dots, \ffi{r-1}{\Delta})$ that encodes the number of elements of a specific cardinality, that is
\begin{equation*}
	\ffi{i}{\Delta}\define|\{S\in \Delta, \#S=i+1\}|.
\end{equation*}
For instance, $\ffi{-1}{\Delta}=1$, because the empty set always belongs to any non-empty simplicial complex and, if the complex is pure of rank $r$, then \ffi{r-1}{\Delta} gives the number of facets of $\Delta$, that is the cardinality of $\Facets{\Delta}$, see Section \ref{sec:preliminaries}.
%

\begin{TheoM}
	Let $\Delta$ be a simplicial complex of rank $r$ \textbf{with pure links} and let $\mathfrak{I}$ be a cone of cooperative games defined on $\Delta$, containing the carrier games $\mathcal{C}$ and $\hat{\mathcal{C}}$ for the star $\Star{i}{\Delta}$ for every vertex $i$.
    Assume that if $v\in\mathfrak{I}$, then the permuted game $\pi \cdot v$ is also in $\mathfrak{I}$ for every $\pi$ in $\operatorname{Symm}(\Delta)$.
    Let $\phi$ be a group value such that for each $i\in [n]$ and for each $v\in\mathfrak{I}$, we can write:
    \[
		\phi_i(v)=\sum_{T\in \Link{i}{\Delta}}p_T^i \left(v(T\cup i) - v(T)\right).
    \]

    If $\subgroupD \subseteq \operatorname{Symm}(\Delta)$ and if $\phi$ satisfies the symmetric axiom, then there are real constants $p_t$ with $\mathbf{t \neq 0}, \dots, \operatorname{rank}\Delta - 1$ such that for every players $i$ and every \emph{non-trivial} $T\in \Link{i}{\Delta}$ one has $p_T^i=p_{|T|}$.

	Moreover, there exists a common value $p_0$ for the probabilities $p_{\emptyset}^i$ if $\textbf{p}=(p_0, p_1, \dots, p_{r-1})$ is the solution of the following system of $n$ linear equations:
	{\em
	\begin{equation*}
	    \left\{ \begin{array}{c}
	         \ff{\Link{1}{\Delta}}\cdot \textbf{p}=1,\\
	         \ff{\Link{2}{\Delta}}\cdot \textbf{p}=1,\\
	          \vdots \\
	         \ff{\Link{n}{\Delta}}\cdot \textbf{p}=1,
	    \end{array} \right.
	\end{equation*}}
	where each equation has form $\sum_{k=0}^{r-1}\ffi{k-1}{\Link{i}{\Delta}} p_{k} =1$.
\end{TheoM}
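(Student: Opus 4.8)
The plan is to play the two families of generators of $\subgroupD$ against the uniqueness of the star-representation of $\phi_i$. The key engine throughout is that, by the linearity step (Theorem \ref{thm:linearity-axiom-step}), once the carrier games $\mathcal{C},\hat{\mathcal{C}}$ for $\Star{i}{\Delta}$ sit inside $\mathfrak{I}$, any two expansions $\sum_T a_T v(T)$ and $\sum_T b_T v(T)$ that agree for all $v\in\mathfrak{I}$ must agree coefficientwise. Since $\mathfrak{I}$ is closed under the $\operatorname{Symm}(\Delta)$-action and $\subgroupD\subseteq\operatorname{Symm}(\Delta)$, every permuted game $\pi\cdot v$ used below stays in $\mathfrak{I}$, so this matching is always legitimate. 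I would first use the ``internal'' swaps $\pi_{L,T}^i$ to force cardinality-dependence player by player, then the transpositions $\pi_{i,j}$ (and the cross-player action of the $\pi_{L,T}^i$) to transport the weights between players, and finally read off the linear system from normalisation.

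First I would fix a player $i$ and a generator $\pi=\pi_{L,T}^i$ with $L,T\in\Link{i}{\Delta}$ and $|L|=|T|\ge 1$. Because $\pi$ fixes $i$, the symmetry axiom reads $\phi_i(v)=\phi_i(\pi\cdot v)$. Expanding the right-hand side with the hypothesised formula, using $\pi(U\cup i)=\pi(U)\cup i$ and the substitution $W=\pi(U)$ (valid since $\pi$ permutes $\Link{i}{\Delta}$), I get $\phi_i(\pi\cdot v)=\sum_{W\in\Link{i}{\Delta}}p_{\pi^{-1}(W)}^i\,(v(W\cup i)-v(W))$. Matching coefficients against $\phi_i(v)$ yields $p_W^i=p_{\pi^{-1}(W)}^i$ for every $W$, and specialising to $W=L$ (with $\pi$ an involution interchanging $L$ and $T$) gives $p_L^i=p_T^i$. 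Letting $L,T$ run over all equicardinal pairs shows $p_T^i$ depends only on $|T|$ for every non-trivial $T$. The empty face is the unique face of cardinality zero, so no internal swap constrains $p_\emptyset^i$; this is exactly why the empty set is singled out.

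Next I would transport the weights across players. For $\pi=\pi_{i,j}=(i,j)$ the symmetry axiom gives $\phi_i(v)=\phi_j(\pi\cdot v)$. Using $\pi(S\cup j)=\pi(S)\cup i$ and the substitution $T=\pi(S)$, which is a cardinality-preserving bijection $\Link{j}{\Delta}\to\Link{i}{\Delta}$ because $\pi$ is an automorphism of $\Delta$, the right-hand side becomes $\sum_{T\in\Link{i}{\Delta}}p_{\pi(T)}^j\,(v(T\cup i)-v(T))$. Coefficient matching then gives $p_T^i=p_{\pi(T)}^j$ for all $T\in\Link{i}{\Delta}$; combined with the previous step and $|\pi(T)|=|T|$ this forces $p_{|T|}^i=p_{|T|}^j$ for non-trivial $T$, while $T=\emptyset$ yields $p_\emptyset^i=p_\emptyset^j$. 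The identical bookkeeping applied to a cross-player swap $\pi_{L,T}^i$, which sends a vertex $k\in L$ to $\pi(k)\in T$, links the remaining players. As the generators of $\subgroupD$ act transitively on the vertices, all the equalities propagate, and every $p_k^i$ — including $k=0$ — becomes independent of $i$; write $p_0,p_1,\dots,p_{r-1}$ for these common values.

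Finally I would feed the common weights into the normalisation $\sum_{T\in\Link{i}{\Delta}}p_T^i=1$, valid for each $i$ since the $\phi_i$ are probabilistic values. Grouping by cardinality and using $\#\{T\in\Link{i}{\Delta}:|T|=k\}=\ffi{k-1}{\Link{i}{\Delta}}$ together with $\ffi{-1}{\Link{i}{\Delta}}=1$, the normalisation turns into $\sum_{k=0}^{r-1}\ffi{k-1}{\Link{i}{\Delta}}\,p_k=1$, that is $\ff{\Link{i}{\Delta}}\cdot\mathbf{p}=1$; ranging over $i=1,\dots,n$ is the asserted system, solved by construction by $\mathbf{p}=(p_0,\dots,p_{r-1})$, whose first coordinate is the common empty-face value $p_0$. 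The step I expect to be the main obstacle is the cross-player transposition matching: unlike the internal swaps, $\pi_{i,j}$ moves both $i$ and $j$, so one must track carefully how faces containing $j$ are carried to faces containing $i$ and verify that $T\mapsto\pi(T)$ is genuinely a cardinality-preserving bijection of the relevant links before comparing coefficients. Tied to this is the delicate point that the empty-face weights are invisible to the internal swaps and can only be equalised through these cross-player relations, so the very existence of the common $p_0$ — and hence the consistency of the displayed system — rests on the connectivity of the vertex set under $\subgroupD$.
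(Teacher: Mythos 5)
Your first half --- using the internal swaps $\pi^{i}_{L,T}$ to force $p^i_T$ to depend only on $|T|$ for non-trivial $T$, and the transpositions $\pi_{i,j}$ to transport weights between players --- is in substance the paper's proof of Theorem \ref{thm:s.a.}: the coefficient matching you invoke is exactly what evaluating at the carrier games achieves, since $\phi_i(\hat{v}_T)=p^i_T$ (the paper uses $\hat{v}_T$ for non-facets and $v_{F\cup i}$ for facets of the link). Up to that point the two arguments coincide.

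The genuine gap is in your treatment of the ``moreover'' clause. You obtain a common empty-face weight by applying $\pi_{i,j}$ to $T=\emptyset$ and then asserting that ``the generators of $\subgroupD$ act transitively on the vertices,'' so that $p^i_\emptyset=p^j_\emptyset$ propagates to all pairs. That transitivity is neither a hypothesis nor true in general: $\pi_{i,j}$ is a generator of $\subgroupD$ only when $\Link{i}{\Delta}\cap\Link{j}{\Delta}$ meets, and a swap $\pi^{i}_{L,T}$ moves only vertices inside $L\cup T$, so for a complex with pure links in which some pairs of links share no non-empty face (a disjoint union of two full triangles, say) the generated group never relates $p^i_\emptyset$ across the pieces, and the inclusion $\subgroupD\subseteq\operatorname{Symm}(\Delta)$ does nothing to repair this. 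Your route also inverts the logic of the statement: the theorem does not claim that a common $p_0$ exists outright, but only \emph{if} $\mathbf{p}=(p_0,\dots,p_{r-1})$ solves the displayed system; your argument, if it worked, would prove solvability of the system as a byproduct, which is exactly what the conditional formulation is designed not to assert. The paper's actual argument (Section \ref{sec:bit-system-f-vector}) needs no connectivity at all: evaluating $\phi_i$ at the carrier game $v_{\{i\}}$, in which $i$ is a dummy player, yields the normalization $\sum_{T\in\Link{i}{\Delta}}p^i_T=1$ --- note this must be \emph{derived}; your appeal to ``the $\phi_i$ are probabilistic values'' is not among the hypotheses --- and substituting the common values $p_k$, $k\geq 1$, from the first part turns it into $p^i_\emptyset=1-\sum_{k\geq 1}\ffi{k-1}{\Link{i}{\Delta}}\,p_k$. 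Hence the numbers $p^i_\emptyset$ admit a common value $p_0$ precisely when $\mathbf{p}$ satisfies $\ff{\Link{i}{\Delta}}\cdot\mathbf{p}=1$ for every $i$, which is the asserted system. Replacing your transitivity step by this normalization argument closes the gap.
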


\noindent
In Theorem \ref{thm:at-least-a-solution-shapley-complex}, we show that if $\Delta$ is a Shapley simplicial complex, then the previous linear system has at least a solution.

\vspace{0.2cm}
The Shapley values are probabilistic values for the cooperative game $(2^{[n]}, v)$ where the $p_T$'s are set following that every player joins coalitions of different sizes with the same probability and all coalitions of the same size are equally likely.
Using this recipe, we define the Shapley values for every simplicial complex $\Delta$. We note that the coalition may have cardinality among $0\leq k\leq r_i\define \operatorname{rank}\Link{i}{\Delta}$ and the number of coalitions of a specific cardinality $k$ is encoded by the $(k-1)-th$ component of the $f$-vector of $\Link{i}{\Delta}$:
%
\begin{equation}\label{eq:Shapley-def-simplicial-complex-introduction}\tag{$\star$}
	\operatorname{Shapley}^{\Delta}_i(v)=\frac{1}{r_i+1} \sum_{T\in \Link{i}{\Delta}}  \frac{1}{\ffi{|T|-1}{\Link{i}{\Delta}}} (v(T\cup i)-v(T)).
\end{equation}

\noindent
In the traditional case, $\Delta$ is a simplex,  $f_{|T|-1}=\frac{|T|!(n-|T|-1)!}{(n-1)!}$, and $r_i+1=n$; so:
\[
	\operatorname{Shapley}_i(v)=\sum_{T\subseteq [n]\setminus i} \frac{1}{n}\frac{|T|!(n-|T|-1)!}{(n-1)!} (v(T\cup i)-v(T)).
\]
With this specifics ($\Delta=2^n$), these individual values are characterized by the following theorem:

\begin{ShapleyThm*}
	Let $\Delta=2^{[n]}$ and let $\mathfrak{I}$ be a cone of cooperative games containing the carrier games $\mathcal{C}$ and $\hat{\mathcal{C}}$ for $N\setminus i$.
	Assume that if $v\in\mathfrak{I}$, then the permuted game $\pi \cdot v$ is also in $\mathfrak{I}$ for every permutation $\pi$ of $[n]$.

	Let $\phi$ be a group value. If each $\phi_i$ satisfies the linearity axiom, the dummy player axiom, 
	and if the symmetric axiom and the \textbf{efficiency axiom} hold for the group value $\phi$, then for every cooperative game in $\mathfrak{I}$ and every $i$ in $[n]$, 
	\[
		\phi_i(v)=\operatorname{Shapley}_i(v).
	\]
\end{ShapleyThm*}

As highlighted with bold text, we did not treat, yet, a piece of the axiomatization: the efficiency.  
The group value $\operatorname{\phi}=(\phi_1, \phi_2, \dots, \phi_n)$ assessment is optimistic (w. r. to $v$) if the sum of the payoff vector $\sum_i \phi_i(v)$ is greater than the $v([n])$, the worth of the grand coalition. If the contrary happens, then $\phi$ is pessimistic (w. r. to $v$).
The classical Shapley values are characterized by group values that are nor optimistic or pessimistic, that is $\sum_i \phi_i(v)=v([n])$. 

If certain coalitions are forbidden, so the grand coalition would be. Thus, it is necessary to study what could take the place of the the total number of payoff, that in the traditional case is simply to $v([n])$.
This study is the center of attention of the manuscript \cite{Martino-Efficiency}.

The proposed characterization of the Shapley values for cooperative games on simplicial complex used a different efficiency scenario. To present it, we need the following objects: we define a player $j$ to be an extension for the coalition $T$ if $j$ can join $T$ in the cooperative game:
\begin{equation*}
	\operatorname{Ext}(T)=\{j\in [n]\setminus T: T\cup j \in \Delta \};
\end{equation*}
The cardinality of this set is denoted by $\operatorname{ext}(T)=|\operatorname{Ext}(T)|$.

\begin{TheoM}[Generalized Shapley's Theorem]
	Let $\Delta$ be a simplicial complex of rank $r$ with pure links and let $\mathfrak{I}$ be a cone of cooperative games on $\Delta$ containing the carrier games $\mathcal{C}$ and $\hat{\mathcal{C}}$ for the star $\Star{i}{\Delta}$ for every vertex $i$.
Assume that if $v\in\mathfrak{I}$, then the permuted game $\pi \cdot v$ is also in $\mathfrak{I}$ for every permutation $\pi$ in $\operatorname{Symm}(\Delta)$.

Let $\phi$ be a group value and assume that each $\phi_i$ satisfies the linearity axiom, the dummy player axiom. 
Assume also that 
the group value fulfills the symmetric axiom with the further constrain that every player enters the games with probability $p_0$, that is there exists a solution $\textbf{p}=(p_0, p_1, \dots, p_{r-1})$ for the linear system $n$ equations:
{\em
	\begin{equation*}
        \left\{ \begin{array}{c}
	         \ff{\Link{1}{\Delta}}\cdot \textbf{p}=1,\\
	         \ff{\Link{2}{\Delta}}\cdot \textbf{p}=1,\\
	          \vdots \\
	         \ff{\Link{n}{\Delta}}\cdot \textbf{p}=1,
	    \end{array} \right.
	\end{equation*}}
	where each equation has the form in $\sum_{k=0}^{r-1}\ffi{k-1}{\Link{i}{\Delta}} p_{k} =1$.

Then, every individual value is the Shapley value in equation \eqref{eq:Shapley-def-simplicial-complex-introduction} if and only if $\Delta$ is $\textbf{s}$-Shapley with $\textbf{s}=(s_0,\dots, s_{r-1})$ and the group value satisfies the following efficiency scenario:
\begin{equation*}
		\sum_{i\in [n]}\phi_i(v)=\sum_{F\in \FacetsD} %
	\left( \frac{1}{s_{r-1}} \right)	v(F) 
	+\frac{1}{r}\sum_{T\in \Delta, |T|<r} 
	\left(\frac{|T|}{s_{|T|-1}} -\frac{\operatorname{ext}(T)}{s_{|T|}}\right) 
	v(T).
\end{equation*}	
\end{TheoM}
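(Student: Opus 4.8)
The plan is to reduce the whole equivalence to one identity — the explicit form of $\sum_{i\in[n]}\phi_i(v)$ — and then read off both implications by comparing the coefficient of each $v(S)$, $S\in\Delta$. First I would invoke the earlier results. Since $\phi$ satisfies the linearity axiom, the dummy player axiom and the symmetric axiom, and since the associated linear system admits the solution $\textbf{p}=(p_0,\dots,p_{r-1})$, Theorem \ref{thm:la+da} together with Theorem \ref{thm:s.a.} let me write, for every vertex $i$ and every $v\in\mathfrak{I}$,
\[
	\phi_i(v)=\sum_{T\in\Link{i}{\Delta}} p_{|T|}\bigl(v(T\cup i)-v(T)\bigr),
\]
with one and the same vector $\textbf{p}$ for all $i$. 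Because $\Delta$ has pure links, each link is pure of rank $r-1$, so $\Delta$ is pure of rank $r$ and $r_i=\operatorname{rank}\Link{i}{\Delta}=r-1$, i.e. $r_i+1=r$, for every $i$; this already makes the weight of a size-$k$ coalition in \eqref{eq:Shapley-def-simplicial-complex-introduction} equal to $\tfrac{1}{r\,\ffi{k-1}{\Link{i}{\Delta}}}$, a fact I will use repeatedly.

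The combinatorial heart of the argument is the reindexing of $\sum_{i\in[n]}\phi_i(v)$. For a fixed face $S\in\Delta$ I would collect all occurrences of $v(S)$: it appears with a positive sign once for every vertex $i\in S$ (writing $S=T\cup i$ with $T=S\setminus i\in\Link{i}{\Delta}$, contributing $+p_{|S|-1}$) and with a negative sign once for every $i\notin S$ with $S\cup i\in\Delta$, i.e. every $i\in\operatorname{Ext}(S)$ (contributing $-p_{|S|}$). A non-maximal face has $\operatorname{ext}(S)\geq 1$ while a facet has $\operatorname{ext}(F)=0$, and by purity the facets are exactly the faces of cardinality $r$; hence
\begin{equation*}
	\sum_{i\in[n]}\phi_i(v)=r\,p_{r-1}\sum_{F\in\FacetsD} v(F)
	+\sum_{\substack{T\in\Delta\\ |T|<r}}\bigl(|T|\,p_{|T|-1}-\operatorname{ext}(T)\,p_{|T|}\bigr)v(T).
\end{equation*}
This bookkeeping — separating the ``entering'' from the ``leaving'' contributions and isolating the facets via $\operatorname{ext}$ — is the step I expect to be the most delicate. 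Throughout, the hypothesis that $\mathfrak{I}$ contains the carrier games $\mathcal{C}$ and $\hat{\mathcal{C}}$ for every star $\Star{i}{\Delta}$ is what guarantees that $\mathfrak{I}$ spans enough of $\charFun$ that an identity of linear functionals valid on $\mathfrak{I}$ forces equality of the coefficient of each $v(S)$.

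For the direction ``$\Rightarrow$'', suppose $\phi_i=\operatorname{Shapley}^{\Delta}_i$ for all $i$. By the uniqueness of the coefficients in the Star representation (Theorem \ref{thm:linearity-axiom-step}), the common weights must equal the Shapley weights, so $p_{|T|}=\tfrac{1}{r\,\ffi{|T|-1}{\Link{i}{\Delta}}}$ for every $i$ and every $T\in\Link{i}{\Delta}$. Since the left-hand side and $r$ do not depend on $i$, the integer $\ffi{|T|-1}{\Link{i}{\Delta}}$ is the same for all $i$; setting $s_k\define\ffi{k-1}{\Link{i}{\Delta}}$ shows that every link has the $f$-vector $\textbf{s}=(s_0,\dots,s_{r-1})$, i.e. $\Delta$ is $\textbf{s}$-Shapley. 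Substituting $p_k=\tfrac{1}{r\,s_k}$ (so $r\,p_{r-1}=\tfrac1{s_{r-1}}$ and $|T|\,p_{|T|-1}-\operatorname{ext}(T)\,p_{|T|}=\tfrac1r\bigl(\tfrac{|T|}{s_{|T|-1}}-\tfrac{\operatorname{ext}(T)}{s_{|T|}}\bigr)$) into the displayed expression for $\sum_i\phi_i(v)$ recovers exactly the stated efficiency scenario.

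For the converse ``$\Leftarrow$'', assume $\Delta$ is $\textbf{s}$-Shapley and that $\phi$ satisfies the efficiency scenario. Comparing the coefficient of $v(F)$ on a facet gives $r\,p_{r-1}=\tfrac1{s_{r-1}}$, hence $p_{r-1}=\tfrac{1}{r\,s_{r-1}}$. Writing $\delta_k\define p_k-\tfrac{1}{r\,s_k}$, the coefficient comparison on a face $T$ with $|T|=m<r$ rearranges to the relation $m\,\delta_{m-1}=\operatorname{ext}(T)\,\delta_{m}$. Starting from $\delta_{r-1}=0$ and running a downward induction on $m$ — legitimate because a face of every cardinality $1\le m\le r$ exists by purity — yields $\delta_{m-1}=0$ for all $m\ge 1$, so $p_k=\tfrac{1}{r\,s_k}$ for every $k$. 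These are precisely the weights of \eqref{eq:Shapley-def-simplicial-complex-introduction}, whence $\phi_i=\operatorname{Shapley}^{\Delta}_i$ for every $i$, completing the equivalence.
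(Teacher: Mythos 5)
Your proof is correct and takes essentially the same route as the paper's: write each $\phi_i$ with common weights $p_{|T|}$ via Theorems \ref{thm:la+da} and \ref{thm:s-a-unique-p-0}, deduce $\textbf{s}$-Shapleyness from uniqueness of the coefficients in the forward direction, and in the converse extract the coefficient relations on facets and non-facets (legitimized by the carrier games in $\mathfrak{I}$) and run the downward recursion to get $p_k=\frac{1}{r s_k}$. The only cosmetic differences are that you re-derive the bookkeeping identity for $\sum_{i}\phi_i(v)$ by direct double counting where the paper invokes Theorem \ref{thm:generic-efficiency}, and your $\delta_k$ notation makes explicit (and cleanly justifies, via purity) the recursion that the paper compresses into ``we recursively show''.
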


The efficiency constrain may seem artificial, but so it is any efficient condition, as Weber pointed out in Section 8 of \cite{Weber-robabilistic-values-for-games}.

Finally, in Theorem \ref{thm:decomposing}, we link this new definition with the development in \cite{Shapley-matroids-static, Shapley-matroids-dynamic, Martino-cooperative}, by finding the extra condition for writing $\operatorname{Shapley}^{\Delta}_i$ as sum of classical Shapley values.

\subsection*{Acknowledgments} 
	The author is currently supported by the Knut and Alice Wallenberg Foundation and by the Royal Swedish Academy of Science.

%
%
%


\setcounter{section}{0}
\setcounter{subsection}{0}
\setcounter{TheoM}{0}

\section{Basic Definitions}\label{sec:preliminaries}

For every positive integer $n$, we denote by $[n]\define\{1,\dots, n\}$. This is the set of verticies of the simplicial complex and, hence, the set of players of the cooperative game. 

\subsection{Simplicial Complexes}
A finite simplicial complex $\Delta$ over $n$ verticies is a family of subsets in $2^{[n]}$ such that if $S\in\Delta$, then very subset $T$ of $S$ will also belong to the family, $T\in \Delta$.
We say that a simplicial complex is non empty if $\Delta\neq \emptyset$ and, if this is the case $\emptyset\in \Delta$ because the empty set is always a subset for every $S\in \Delta$.

\noindent
The family $\Delta$ has natural a rank function $\rank$ given by the cardinality of its sets:
\[
	\begin{array}{cccc}
		\rank:&\Delta & \rightarrow & \mathbb{N}\\
		 &T	&\mapsto & |T|.
	\end{array}
\]
The elements of $\Delta$ that are maximal by inclusions are called \emph{facets}, the other ones are faces.
The maximal value of the rank function is the rank of the simplicial complex $\rank{\Delta}$. 
The set of facets of $\Delta$ is  $\FacetsD=\{F_1, \dots, F_k\}$ and for every $S\in \Delta$ let $\Facet{S}{\Delta}$ be the set of facets in $\Delta$ that contain $S$.

%

In this manuscript, we assume that $\Delta$ is always a finite non-empty simplicial complex over $n$ verticies of rank $r\define \rank{\Delta}$. 


If $S$ is an element in $\Delta$, then $\bar{S}\define 2^{S}$ is the $(|S|-1)$-dimensional simplex defined on the verticies of $S$. The next two definitions are very important for this work.

\begin{definition}\label{def:star}
The star of an element $S$ in $\Delta$ is the simplicial complex defined to be the collection of all subset in $\bar{T}$ with $T$ being in $\Delta$ and containing $S$,
\begin{equation*}
	\Star{S}{\Delta}=\{A:\, A\in \bar{T},\, T\in \Delta,\, S\subseteq T\}.
\end{equation*}
\end{definition}

\noindent
We highlight when $S=\{i\}$ is a vertex, then $\Star{i}{\Delta}$ is the set of simplex $\bar{T}$ containing $i$, that is
\[
	\Star{i}{\Delta}=\{A:\, A \subseteq T,\, i\in T\in \Delta\}.
\]

\begin{definition}\label{def:link}
The link of an element $S$ in a simplicial complex $\Delta$ is made by the subsets $A$ of $T\in \Delta$, such that $T$ is disjoint by $S$ and can be completed by $S$, $S\cup T$, in $\Delta$:

\begin{equation*}
	\Link{S}{\Delta}=\{A:\, A\in \bar{T} \mbox{ with } T\in \Delta \mbox{ such that } S\cap T = \emptyset, S\cup T \in \Delta\}.
\end{equation*}
\end{definition}

\noindent
The case when $S$ is the singleton $\{i\}$ will be extremely relevant in our work: $\Link{i}{\Delta}$ is the set of simplex $T$ in $\Delta$ with $i\notin T$ such that $T\cup i\in \Delta$:
\[
	\Link{i}{\Delta}=\{T\in \Delta: i\notin T \mbox{ and } T\cup i \in \Delta\}.
\]

%
%
%
%
%

The $f$-vector of a simplicial complex is a integral vector $\ffD=(\ffi{-1}{\Delta}, \dots, \ffi{r-1}{\Delta})$ where
\begin{equation}\label{eq:def-f-vector}
	\ffi{i}{\Delta}\define\#\{S\in \Delta, \#S=i+1\}.
\end{equation}
If $\Delta\neq \emptyset$, then the empty set belongs to the complex and, thus, $\ffi{-1}{\Delta}=1$.
Just to give a concrete example, if $\Delta$ is a the full simplex on $n$ verticies, $\Delta=2^{[n]}$, then $\ffi{i}{\Delta}={n \choose i+1}$.

\subsection{Cooperative games on simplicial complexes}
A cooperative game on a simplicial complex $\Delta$ is the pair $(\Delta, v)$ where $v$ is a characteristic function $v:\Delta\rightarrow \mathbb{R}$ with $v(\emptyset)=0$. 
This notion has been defined in \cite{Martino-cooperative}, where the author generalizes the notion of cooperative game on matroids given by Bilbao, Driessen, Jim\'{e}nez Losada and Lebr\'{o}n \cite{Shapley-matroids-static}.

The verticies of $\Delta$ are the players of the cooperative game and a coalition $T$ is \emph{feasible} if $T\in \Delta$.
The set $\charFun$ of characteristic functions on $\Delta$ is a real vector space, similarly as in the traditional setting.
Indeed, given $(\Delta, v)$, one can re-scale the characteristic function with a scalar $c$ and obtain a new cooperative game $(\Delta, cv)$, where $(cv)(T)=c(v(T))$ for every subset $T\in \Delta$.

Each individual value $\phi_i(v)$ should assesses the worth of the participation of the player $i$ in to the game. 
Of course, we are looking for values such that $\phi_i(cv)=c\phi_i(v)$, because the worth of each player is just re-scaled.
For this reason, we consider a cone $\mathfrak{I}$ of cooperative game in $\charFun$.

\begin{definition}
An \emph{individual value} for a player $i$ in $[n]$ is a function $\phi_i:\mathfrak{I}\rightarrow \mathbb{R}$.
\end{definition}

\subsection{Carrier games}
There are two types of cooperative games having an central role in the theory of probalistic values \cite{Weber-robabilistic-values-for-games}. 
Despite in literature this terminology often refer to the first one, we are going to called the following \emph{carrier games}:
\[
	\mathcal{C} = \{v_T: \emptyset \neq T \subset [n] \}, \,\,\,\hat{\mathcal{C}} = \{\hat{v}_T: \emptyset \neq T \subset [n] \},
\]
where $v_T$ and $\hat{v}_T$ are so defined:
\[
	v_T (S)=\begin{cases}
				1 &  T\subseteq S\\
				0 &\text{otherwise.}
			\end{cases},\,\,\, \hat{v}_T (S)=\begin{cases}
				1 &  T\subsetneq S\\
				0 &\text{otherwise.}
			\end{cases}
\]
We generalize these games for any element $T$ of a simplicial complex. Indeed for every partially order set $(P, \leq_P)$ and every element $q$ in $P$ we consider the following function:
\[
	u_q^P(s)\define \begin{cases}
				1 &  q\leq_P s \\
				0 &\text{otherwise.}
			\end{cases},\,\,\, 				\hat{u}_q^P(s)\define \begin{cases}
				1 &  q<_P s \\
				0 &\text{otherwise.}
			\end{cases}
\]
Thus, we define
\[
	v_T (S)\define u_T^\Delta(S),\,\,\, \hat{v}_T (S)\define \hat{u}_T^\Delta (S).
\]
In the traditional setting, these functions reproduce the carrier games.

\begin{definition}
	Let $\Delta$ be a simplicial complex. The sets of carrier games are so defined:
	\[
		\mathcal{C} = \{v_T: \emptyset \neq T \in \Delta \}, \,\,\,\hat{\mathcal{C}} = \{\hat{v}_T: \emptyset \neq T\in \Delta \}, 
	\]
	where $v_T (S)\define u_T^\Delta(S)$ and $\hat{v}_T (S)\define \hat{u}_T^\Delta (S)$; moreover, $\hat{v}_{\emptyset}\define \hat{u}_{\emptyset}^\Delta$.
\end{definition}

In almost all of the main results in this manuscript we need to require that certain carrier games belong to $\mathfrak{I}$. In particular, we are going to write "\emph{Assume that the cone of cooperative games $\mathfrak{I}$ contains the carrier games $\mathcal{C}$ and $\hat{\mathcal{C}}$ for $\Star{i}{\Delta}$.}" and we mean that both $v_T$ and $\hat{v}_T$ belong to $\mathfrak{I}$ for all $T\in \Star{i}{\Delta}$.

\section{Coalitions Set and Linearity Axiom}\label{sec:linearity}

Before even starting with the axiomatization of the probabilistic values, we want to devote a few paragraphs to study the set of coalitions that a player $i$ may join in the game.

\noindent
Let $(\Delta, v)$ be a cooperative game on a simplicial complex. 
The coalition set $\Coalition{i}{\Delta}$ is the set of feasible coalitions that the player $i$ can join in the game. $\Coalition{i}{\Delta}$ is defined as a set, but in the next proposition 
we show it is a simplicial complex.

\begin{proposition}\label{prop:coalition-link}
	The coalition set $\Coalition{i}{\Delta}$ is a simplicial complex. Precisely:
	\begin{equation*}
		\Coalition{i}{\Delta}=\Link{i}{\Delta}.
	\end{equation*}		
\end{proposition}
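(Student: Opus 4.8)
The plan is to verify the claimed equality by unwinding both definitions and comparing their membership conditions set-theoretically, and then to read off from this identification that the resulting family is closed under inclusion, hence a simplicial complex. First I would make precise the defining property of $\Coalition{i}{\Delta}$: a set $T$ is a coalition that player $i$ can join exactly when $i\notin T$ and the enlarged coalition $T\cup i$ is feasible, i.e.\ $T\cup i\in\Delta$. The apparent additional requirement $T\in\Delta$ is automatic here: since $\Delta$ is closed under inclusion and $T\subseteq T\cup i$, membership $T\cup i\in\Delta$ forces $T\in\Delta$.

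Next I would compare with Definition \ref{def:link}, which in the singleton case reads $\Link{i}{\Delta}=\{T\in\Delta:\ i\notin T,\ T\cup i\in\Delta\}$. The two membership conditions coincide once the redundant clause $T\in\Delta$ is dropped via downward closure, so a routine double inclusion finishes the equality. Indeed, for $T\in\Coalition{i}{\Delta}$ we have $i\notin T$ and $T\cup i\in\Delta$, and $T\subseteq T\cup i\in\Delta$ gives $T\in\Delta$, so $T\in\Link{i}{\Delta}$; conversely every element of the link satisfies the coalition conditions verbatim.

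Finally, to secure the first assertion — that $\Coalition{i}{\Delta}$ is a simplicial complex — I would check closure under inclusion directly on the link: if $T\in\Link{i}{\Delta}$ and $S\subseteq T$, then $i\notin S$ and $S\cup i\subseteq T\cup i\in\Delta$, so $S\cup i\in\Delta$ by downward closure of $\Delta$, whence $S\in\Link{i}{\Delta}$. Thus $\Link{i}{\Delta}$, and with it $\Coalition{i}{\Delta}$, is closed under taking subsets.

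The only genuine subtlety — and the single point I would argue carefully rather than assume — is the formalization of ``player $i$ can join the coalition $T$'' as the conjunction of $i\notin T$ and $T\cup i\in\Delta$; everything downstream is a direct unwinding of definitions with no real obstacle. I expect no hard step here: the proposition is essentially a bookkeeping statement matching the game-theoretic notion of a joinable coalition with the combinatorial notion of a link.
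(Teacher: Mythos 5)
Your proof is correct and takes the same route as the paper, which simply observes that the statement follows from the definition of the link; you have merely spelled out the definitional unwinding (the redundancy of $T\in\Delta$, the double inclusion, and downward closure) that the paper leaves implicit. No gap, nothing further needed.
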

\begin{proof}
	The statement follows naturally by definition of link.
\end{proof}

\noindent
In other words, the previous statement shows that the coalition set does only depends on the simplicial complex $\Delta$, i.e. it is independent by the characteristic function $v$.

Now, given a (positive) real scalar $c$, we may also consider the re-scaled game $(n, c\cdot v)$ given by the characteristic function $(c\cdot v)(T)=cv(T)$ for every $T\in \Delta$. 
Since in this new game, the worth of every coalition has been only re-scaled, it is natural that the new individual value $\phi_i(c\cdot v)$ would also be re-scaled by $c$, $\phi_i(c\cdot v)=c\phi_i(v)$. 

\begin{description}
	\item [From now on] we assume that the individual value function is defined in a cone $\mathfrak{I}$ of cooperative games in the vector space $\charFun$.
\end{description}


The first classical requirement is that the individual values are linear functions.

\begin{description}
	\item [Linearity Axiom] 
	The individual value function $\phi_i(v)$ of the player $i$ for the cooperative game $(\Delta, v)$ is linear on the cone $\mathfrak{I}$.
\end{description}


At this stage, it is worth to provide an interpretation for $\Star{i}{\Delta}$. 
This is in fact the completed set of coalitions for the player $i$, that is the set of coalition $T$ the player $i$ can join together with the formed coalitions $T\cup i$. 
	
\noindent
It is reasonable to that the individual value $\phi_i(v)$ only depends on values of the characteristic function $v$ on for the coalitions in $\Star{i}{\Delta}$. 
This is precisely the content of the next theorem.

\begin{TheoM}\label{thm:linearity-axiom-step}
	%
	Assume that the cone of cooperative games $\mathfrak{I}$ contains the carrier games $\mathcal{C}$ and $\hat{\mathcal{C}}$ for every $T\in\Delta$ but $T\cup i\notin \Delta$.
	%
	If the individual value $\phi_i$ fulfills the linearity axiom, 
	then, $\phi_i(v)$ only depends on the value of $v$ on the Star of $i$ in $\Delta$, $\Star{i}{\Delta}$, that is
	\begin{equation}\label{eq:linearity-axiom}
	\phi_i(v)=\sum_{T\in \Star{i}{\Delta}} a_T v(T).
\end{equation}
The set of coefficients $\{a_T\}$ is unique.	
\end{TheoM}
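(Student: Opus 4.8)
The plan is to turn the whole question into linear algebra on the finite-dimensional space $\charFun$ and then use the carrier games to pin down the coefficients. First I would record that, since $v(\emptyset)=0$ is fixed, the indicator games $e_T$ (with $e_T(S)=1$ if $S=T$ and $0$ otherwise), ranging over $\emptyset\neq T\in\Delta$, form a basis of $\charFun$, and that the dual basis is the family of evaluation functionals $v\mapsto v(T)$. By the linearity axiom $\phi_i$ is the restriction to $\mathfrak I$ of a linear functional on $\charFun$, so there are reals $\{a_T\}_{\emptyset\neq T\in\Delta}$ with $\phi_i(v)=\sum_{\emptyset\neq T\in\Delta}a_T v(T)$ and $a_T=\phi_i(e_T)$; because the evaluation functionals are linearly independent, these coefficients are already unique. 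What remains is to show that $a_T=0$ whenever $T\notin\Star{i}{\Delta}$, so that only the Star survives.

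Next I would set up the bridge to the carrier games. Using Proposition \ref{prop:coalition-link} and the definition of the Star, note that $\Star{i}{\Delta}=\{A\in\Delta:\ A\cup i\in\Delta\}$, so that the faces $T\in\Delta$ with $T\cup i\notin\Delta$ are exactly those lying off $\Star{i}{\Delta}$ — precisely the $T$ for which the hypothesis guarantees $v_T,\hat v_T\in\mathfrak I$. The key elementary identity is $e_T=v_T-\hat v_T$ for every $\emptyset\neq T\in\Delta$: indeed $v_T(S)-\hat v_T(S)=1$ exactly when $T\subseteq S$ but not $T\subsetneq S$, i.e. when $S=T$. This shows in passing that $\{v_T\}$ is itself a basis (unitriangular against $\{e_T\}$ under inclusion), re-proving both the representation and its uniqueness, and, more importantly, it gives $a_T=\phi_i(e_T)=\phi_i(v_T)-\phi_i(\hat v_T)$ for each off-Star face $T$, both terms being legitimate since $v_T,\hat v_T\in\mathfrak I$.

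The crux is then the vanishing $\phi_i(v_T)=\phi_i(\hat v_T)$ for $T$ with $T\cup i\notin\Delta$. This is exactly the content of the theorem: a coalition that $i$ can neither join nor complete is invisible to $i$'s value. I would argue it from the structural fact that, when $T\cup i\notin\Delta$, every feasible $S\supseteq T$ avoids $i$, so the unanimity carrier $v_T$ and its strict version $\hat v_T$ differ only on the single face $T$, which lies outside the coalition complex $\Link{i}{\Delta}$ available to $i$; hence player $i$ has no way to act on the difference $e_T=v_T-\hat v_T$, forcing $a_T=0$. Summing over all $T\notin\Star{i}{\Delta}$ collapses the general expansion to $\phi_i(v)=\sum_{T\in\Star{i}{\Delta}}a_T v(T)$, with uniqueness of the surviving coefficients inherited from the first step.

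The genuinely delicate point is this last vanishing step; the linear-algebra skeleton and the identity $e_T=v_T-\hat v_T$ are routine. The obstacle is to justify rigorously, from the linearity axiom together with the mere presence of the off-Star carrier games in $\mathfrak I$, that $\phi_i$ cannot separate $v_T$ from $\hat v_T$ — i.e. to convert the intuition that inaccessible coalitions carry no information about $i$'s marginal behaviour into a formal deduction. I expect this to be where the carrier-game hypothesis does its real work, and where the argument must be stated with care rather than left to intuition.
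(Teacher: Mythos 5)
Your first two steps reproduce the paper's proof exactly: the expansion $\phi_i(v)=\sum_{T\in\Delta}a_Tv(T)$ with $a_T=\phi_i(\mathbbm{1}_T)$, the identity $\mathbbm{1}_T=v_T-\hat{v}_T$, and the identification of the off-star faces as precisely those $T\in\Delta$ with $T\cup i\notin\Delta$. But at the decisive point --- showing $a_T=\phi_i(v_T)-\phi_i(\hat{v}_T)=0$ for such $T$ --- your proposal has a genuine gap, and you say so yourself. Your third paragraph is circular: ``player $i$ has no way to act on the difference $e_T=v_T-\hat{v}_T$, forcing $a_T=0$'' is a restatement of the desired conclusion $\phi_i(\mathbbm{1}_T)=0$, not a deduction. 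Moreover, no deduction from the stated hypotheses can exist: linearity plus the mere membership of the carrier games in $\mathfrak{I}$ places no constraint on the numbers $\phi_i(v_T)$ and $\phi_i(\hat{v}_T)$. Concretely, take $\mathfrak{I}=\charFun$ and $\phi_i(v)\define v(T_0)$ for a fixed $T_0\in\Delta$ with $T_0\cup i\notin\Delta$; every hypothesis of the theorem holds, yet $a_{T_0}=1$. So the vanishing step requires some assignment of values to $\phi_i$ on the carrier games, beyond linearity.

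That assignment is exactly how the paper closes the step: it declares $\phi_i(v_T)=v_T(i)=0$ and $\phi_i(\hat{v}_T)=\mathbbm{1}_{T\cup i}v(T\cup i)=0$ when $T\cup i\notin\Delta$. The geometric content is the one you identified --- if $T\cup i\notin\Delta$, then no feasible coalition containing $i$ can contain $T$ (else $T\cup i$ would be a face), so both $v_T$ and $\hat{v}_T$ vanish identically on $\Star{i}{\Delta}$ and player $i$ is a null player in both games --- but converting that observation into $\phi_i(v_T)=\phi_i(\hat{v}_T)=0$ is dummy-player reasoning, not a consequence of the linearity axiom. In other words, your instinct that ``this is where the carrier-game hypothesis does its real work'' is correct, and in fact the work it does in the paper is of dummy type; a complete write-up along your lines would have to either impose those two evaluations as hypotheses or invoke the dummy axiom for these degenerate games, rather than leave the step as an acknowledged obstacle. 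A minor further point: uniqueness of the full family $\{a_T\}_{T\in\Delta}$ needs $\operatorname{span}(\mathfrak{I})=\charFun$; linear independence of the evaluation functionals on $\charFun$ does not give uniqueness of a representation of a functional defined only on a proper cone (the paper elides this as well).
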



\begin{proof}
	Since the function $\phi_i$ is linear, then $\phi_i(v)=\sum_{T\in \Delta} a_T v(T)$ and the coefficients $a_T$ are unique.

	We need to prove that $a_T$ is zero if $T$ is not a set in $\Star{i}{\Delta}$. Specifically, we want to show that $a_T$ is zero if $T\in\Delta$ but $T\cup i\notin \Delta$.
	Let $\mathbbm{1}_T$ be the indicator function of the set $T$ in $\Delta$, that is:
	\[
		\mathbbm{1}_T(S)\define \begin{cases}
				1 &  T=S \in \Delta \\
				0 &\text{otherwise.}
			\end{cases}.
	\]
	
	Since the carrier games belong to $\mathfrak{I}$, then we write the indicator functions as $\mathbbm{1}_T=v_T-\hat{v}_T$. We note that $\phi_i(\mathbbm{1}_T)=a_Tv(T)$ and then $\phi_i(\mathbbm{1}_T)=\phi_i(v_T-\hat{v}_T)=a_Tv(T)=0$ if $T\cup i\notin \Delta$. 	
	This is because $\phi_i(\hat{v}_T)=\mathbbm{1}_{T\cup i}v(T\cup i)$ and because $\phi_i(v_T)=v_T(i)=0$.
\end{proof}

For now, it is convenient to rewrite $\phi_i(v)$ in \eqref{eq:linearity-axiom} as
\begin{equation}\label{eq:linearity-axiom-link}
	\phi_i(v)=\sum_{T\in \Link{i}{\Delta}} a_T v(T) +\sum_{T\in \Link{i}{\Delta}}a_{T\cup i} v(T\cup i).
\end{equation}

\section{Dummy Player Axiom}\label{sec:dummy}
The next natural condition we want to assume is that if a player does contribute the same specific amount $v(i)$ to every coalition, then her/his retribution should be precisely $v(i)$, that is the number of payoff he can reach by her/himself.
Such player is traditionally called \emph{dummy}, because he has no strategy, not matter what coalition he joins. This is the mathematical description of this phenomenon.

\noindent
A player $i$ is dummy for $v$ if for every set $T$ in the coalition set $\Coalition{i}{\Delta}$ one has
\[
	v(T\cup i)=v(T)+v(i).
\]

We are ready to state the so called dummy axiom:
\begin{description}
	\item [Dummy Player Axiom] 
If a player $i$ is dummy for the cooperative game $(\Delta, v)$, then its indi\-vidual value $\phi_i(v)$ is precisely $v({i})$.
\end{description}

\noindent
This axiom is a strong condition on the individual value and indeed provides the very traditional formula expressing $\phi_i(v)$ in terms of the marginal contributions $\left(v(T\cup i) - v(T)\right)$. 

\begin{TheoM}\label{thm:la+da}
	Assume that the cone of cooperative games $\mathfrak{I}$ contains the carrier games $\mathcal{C}$ for $\Link{i}{\Delta}$.
	If the individual value $\phi_i$ can be written as in equation \eqref{eq:linearity-axiom} and satisfies the dummy player axiom, then
	there exist real numbers 
	$\{p_T\}_{T\in \Link{i}{S}}$ 
	such that
	$$\sum_{T\in \Link{i}{\Delta}} p_T=1$$
	and for every $v$ in $\mathcal{I}$
	\begin{equation}\label{eq:012-phi-p_T}
		\phi_i(v)=\sum_{T\in \Link{i}{\Delta}}p_T \left(v(T\cup i) - v(T)\right).
	\end{equation}	
\end{TheoM}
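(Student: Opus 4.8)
The plan is to start from the linear expansion \eqref{eq:linearity-axiom-link} furnished by Theorem \ref{thm:linearity-axiom-step}, namely $\phi_i(v)=\sum_{T\in \Link{i}{\Delta}} a_T v(T) +\sum_{T\in \Link{i}{\Delta}}a_{T\cup i} v(T\cup i)$, and to use the dummy axiom to force the relations $a_T=-a_{T\cup i}$ on the coefficients. Setting $p_T:=a_{T\cup i}$ then collapses the two sums into the single marginal-contribution sum \eqref{eq:012-phi-p_T}, and a final dummy game pins down the normalization $\sum_T p_T=1$.

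The first step is to notice that each carrier game $v_S$ with $\emptyset\neq S\in\Link{i}{\Delta}$ turns $i$ into a dummy player. Indeed $i\notin S$, so $v_S(i)=0$ and $v_S(T\cup i)=v_S(T)$ for every $T\in\Link{i}{\Delta}$ (both sides equal $1$ exactly when $S\subseteq T$, using $i\notin S$); hence $v_S(T\cup i)=v_S(T)+v_S(i)$, which is the dummy condition. The dummy axiom then gives $\phi_i(v_S)=v_S(i)=0$. Evaluating the right-hand side of \eqref{eq:linearity-axiom-link} at $v_S$ yields $\phi_i(v_S)=\sum_{T\supseteq S}(a_T+a_{T\cup i})$, the sum ranging over $T\in\Link{i}{\Delta}$ containing $S$. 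Thus $\sum_{T\supseteq S}(a_T+a_{T\cup i})=0$ for every nonempty $S\in\Link{i}{\Delta}$. Writing $b_T:=a_T+a_{T\cup i}$, all superset-sums of $b$ vanish, so a downward induction on $|T|$ starting from the facets of $\Link{i}{\Delta}$ (equivalently, M\"obius inversion over the face poset of $\Link{i}{\Delta}$) gives $b_T=0$, that is $a_T=-a_{T\cup i}=:-p_T$, for every nonempty $T\in\Link{i}{\Delta}$.

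Substituting back into \eqref{eq:linearity-axiom-link} and using $v(\emptyset)=0$ to discard the term $a_{\emptyset}v(\emptyset)$, the remaining two sums merge into $\phi_i(v)=\sum_{T\in\Link{i}{\Delta}}p_T\,(v(T\cup i)-v(T))$, valid for every $v\in\mathfrak{I}$; this is exactly \eqref{eq:012-phi-p_T} (note that the coefficient $a_{\emptyset}$ is never determined, since it multiplies $v(\emptyset)=0$, and plays no role). It remains only to fix the scale $\sum_T p_T=1$.

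The normalization is the one place where the carrier games in the hypothesis do not suffice on their own: every $v_S$ with $S\in\Link{i}{\Delta}$ satisfies $v_S(i)=0$, so it only certifies the homogeneous relations $b_T=0$ and says nothing about the overall scale. To fix the scale I would feed the formula just obtained a dummy game with nonzero individual worth, the natural witness being $w:=v_{\{i\}}$ (equivalently, $w(S)=1$ if $i\in S$ and $w(S)=0$ otherwise), which is dummy for $i$ with $w(i)=1$. For any dummy game the marginal contribution $v(T\cup i)-v(T)$ equals $v(i)$ for every $T\in\Link{i}{\Delta}$, so the established formula gives $\phi_i(w)=w(i)\sum_T p_T=\sum_T p_T$, while the dummy axiom gives $\phi_i(w)=w(i)=1$; hence $\sum_T p_T=1$. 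I expect this normalization to be the main subtlety: one must exhibit inside $\mathfrak{I}$ a dummy game separating the coefficient of $v(i)$, whereas the homogeneous relations and the merging of the two sums follow routinely from the dummy carrier games $v_S$ and the M\"obius-type inversion.
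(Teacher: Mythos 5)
Your proposal is correct and follows essentially the same route as the paper's proof: the carrier games $v_S$ for nonempty $S\in\Link{i}{\Delta}$ make $i$ a dummy and force the vanishing of all superset-sums of $b_T=a_T+a_{T\cup i}$, a downward induction from the facets of $\Link{i}{\Delta}$ (the paper phrases it as induction on co-rank, you as induction on cardinality or M\"obius inversion) gives $b_T=0$, and the normalization $\sum_T p_T=1$ is obtained exactly as in the paper by evaluating on the dummy game $v_{\{i\}}$. Your explicit treatment of the $T=\emptyset$ term, which the paper defers to a remark after its proof, is only a presentational difference.
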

\begin{proof}
	The structure of the proof follows similar as the one of Theorem 2 in \cite{Weber-robabilistic-values-for-games} but since there are several mathematical difference we write it here in details.
	
	We start by showing the second statement.
	First, we observe that for any nonempty $T\in \Link{i}{\Delta}$, the player $i$ is dummy for the carrier game $v_T$; moreover $\phi_i(v_T)=v_T(i)=0$, by definition of the game $v_T$.
	
	Now consider a facet $F_{(0)}$ in $\Link{i}{\Delta}$ and let us compute $\phi_i(v_{F_{(0)}})$. Since $F_{(0)}\cup i$ is also a facet for $\Delta$, one gets that $\phi_i(v_{F_{(0)}})=a_{F_{(0)}}+a_{F_{(0)}\cup i}$. In addition, $a_{F_{(0)}}+a_{F_{(0)}\cup i}=0$ because $i$ is a dummy player for the carrier game $v_{F_{(0)}}$. 
	
	\noindent
	Let us assume that $\phi_i(v_{F_{(j)}})=a_{F_{(j)}}+a_{F_{(j)}\cup i}=0$ for all face $F_{(j)}$ of co-rank $j$ less or equal to $k$ in $\Link{i}{\Delta}$ and let us prove that $a_{F_{(k+1)}}+a_{F_{(k+1)}\cup i}=0$ for every face $F_{(k+1)}$ of co-rank $k+1$.
	Indeed, by definition of $v_{F_{(k+1)}}$ we have that $\phi_i(v_{F_{(k+1)}})= \sum_{T \supseteq F_{(k+1)}, T\in \Star{i}{\Delta}} a_T$. This is equal to
	\[
		\sum_{j=0}^{k+1}\sum_{K} a_K
	\]	
	Where the second sum runs over the set $K\in \Star{i}{\Delta}$ of co-rank $j$, containing $F_{(k+1)}$, $K \supseteq F_{(k+1)}$. 

	\noindent	
	We then isolate the term $a_{F_{(k+1)}}+a_{F_{(k+1)}\cup i}$ 
	\[
		a_{F_{(k+1)}}+a_{F_{(k+1)}\cup i}+\sum_{j=0}^{\textbf{k}}\sum_{T'} a_{T'},
	\]
	and, by induction, the sum $\sum_{j=0}^{\textbf{k}}\sum_{T'} a_{T'}$ splits like $\sum_{j=0}^{\textbf{k}}\sum_{T} \left( a_{T}+a_{T\cup i}\right)$ and where $T$ runs over the set $T\in \Link{i}{\Delta}$ of co-rank $j$, containing $F_{(k+1)}$. Hence $\sum_{j=0}^{\textbf{k}}\sum_{T} \left( a_{T}+a_{T\cup i}\right)=0$ and consequently, $\phi_i(v_{F_{(k+1)}})=a_{F_{(k+1)}}+a_{F_{(k+1)}\cup i}=0$.
	
	
%
	
	\noindent
	To conclude the proof of the second statement, it remains to denote for every $T$ in $\Link{i}{\Delta}$ $$p_T\define a_{T\cup i}=-a_{T}.$$ 
	
	To prove the first part, instead, consider the carrier game $v_{i}$ in $\Link{i}{\Delta}$ and observe that $i$ is a dummy player in this game, so 
	\[
		\phi_{i}(v_i)=v_i(i)=1.
	\]
	Using \eqref{eq:012-phi-p_T}, that we have just proved, one has
 	\[
 		\phi_i(v)=\sum_{T\in \Link{i}{\Delta}}p_T \left(v(T\cup i) - v(T)\right)=\sum_{T\in \Link{i}{\Delta}}p_T=1.
 	\]
\end{proof}

\begin{remark}
There is no restriction on the real number $p_T$, so a-priory, $p_T$ may also be negative.
\end{remark}

\begin{remark}
It is worth to clarify our notation in the case $T=\emptyset$: $p_{\emptyset}=a_{i}=-a_{\emptyset}$.
\end{remark}


\section{Monotonicity Axiom}

The third classical condition required for an individual axiom is the monotonicity. This deals with a class of very special functions on the simplicial complex, the monotone functions: $v:\Delta \rightarrow \mathbb{R}$ such that for every $S\subseteq T\in \Delta$, then $v(S)\leq v(T)$. 
%


\begin{description}
	\item [Monotonicity Axiom] 
	If $v$ is a monotone game, then $\phi_i(v)\geq 0$ for each player $i$.
\end{description}

In other words, if $v$ is a monotone function, the player $i$ knows that his marginal contribution to the game in every coalition will be always positive, that is $v(T\cup i)\geq v(T)$. Thus, the player is going to require a positive retribution. 

In view of what we have just proven in Theorem \ref{thm:la+da}, or by assuming that the individual value $\phi_i$ can written as in equation \eqref{eq:012-phi-p_T}, we immediately show that 
the real constants $\{p_T\}_{T\in \Link{i}{\Delta}}$ are actually always non-negative and, hence, they are a probability distribution on the elements of $\Link{i}{\Delta}$.

\begin{TheoM}\label{thm:la+da+ma}
	Assume that the cone of cooperative games $\mathfrak{I}$ contains the carrier games $\mathcal{C}$ and $\hat{\mathcal{C}}$ for $\Star{i}{\Delta}$.
	We assume that the individual value $\phi_i$ can written as in equation \eqref{eq:012-phi-p_T}, that $\sum_{T\in \Link{i}{\Delta}} p_T=1$, and that it
	fulfills 
	the monotonicity axiom.
	Then, the constants $\{p_T\}_{T\in \Link{i}{\Delta}}$ are a probability distribution.
\end{TheoM}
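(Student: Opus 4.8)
The plan is to prove that each $p_T \geq 0$; since the hypothesis already guarantees $\sum_{T\in\Link{i}{\Delta}} p_T = 1$, this is exactly what is needed for $\{p_T\}$ to be a probability distribution. The strategy, following the spirit of Weber's monotonicity argument, is to exhibit for each fixed $T\in\Link{i}{\Delta}$ a single monotone game in $\mathfrak{I}$ whose individual value under $\phi_i$ collapses to precisely $p_T$. Once this is done, the monotonicity axiom forces that value to be $\geq 0$, hence $p_T\geq 0$.

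The first step is to identify the right games, and I claim that the carrier games $\hat{v}_T$ themselves do the job: their marginal contributions form a Kronecker delta. Concretely, fix $T\in\Link{i}{\Delta}$ with $T\neq\emptyset$ and take any $S\in\Link{i}{\Delta}$, so that $i\notin S$ and $i\notin T$. Then $\hat{v}_T(S)=1$ iff $T\subsetneq S$, while $\hat{v}_T(S\cup i)=1$ iff $T\subsetneq S\cup i$, which (because $i\notin T$) is equivalent to $T\subseteq S$. Subtracting, the marginal contribution $\hat{v}_T(S\cup i)-\hat{v}_T(S)$ equals $1$ when $S=T$ and $0$ otherwise. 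Plugging this into the assumed expression \eqref{eq:012-phi-p_T} for $\phi_i$ collapses the sum to the single surviving term, giving $\phi_i(\hat{v}_T)=p_T$.

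It then remains to check the two facts that make the argument go through. First, $\hat{v}_T$ is monotone: if $R\subseteq R'$ in $\Delta$ and $T\subsetneq R$, then $T\subsetneq R'$, so $\hat{v}_T(R)\leq\hat{v}_T(R')$. Second, $\hat{v}_T$ lies in $\mathfrak{I}$, since $T\in\Link{i}{\Delta}\subseteq\Star{i}{\Delta}$ and the cone is assumed to contain $\hat{\mathcal{C}}$ for $\Star{i}{\Delta}$. The only case outside this scheme is $T=\emptyset$, where one instead uses $\hat{v}_{\emptyset}$: since $\hat{v}_{\emptyset}(R)=1$ exactly when $R\neq\emptyset$, the difference $\hat{v}_{\emptyset}(S\cup i)-\hat{v}_{\emptyset}(S)$ equals $1$ when $S=\emptyset$ and $0$ otherwise, so $\phi_i(\hat{v}_{\emptyset})=p_{\emptyset}$; this game is again monotone and belongs to $\hat{\mathcal{C}}$. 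Combining all cases, every $p_T$ is non-negative, and together with $\sum_T p_T=1$ this makes $\{p_T\}_{T\in\Link{i}{\Delta}}$ a probability distribution.

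The step I expect to require the most care is the marginal-contribution computation, and in particular recognizing that $\hat{v}_T$ — rather than $v_T$ — is the game that isolates $p_T$: for $v_T$ the player $i$ is dummy, so every marginal contribution vanishes and no information about $p_T$ is obtained, whereas for $\hat{v}_T$ the strict-containment condition is exactly what turns the difference into the indicator of $S=T$. Keeping track of the hypotheses $i\notin T$ and $i\notin S$ throughout, and separating out the degenerate case $T=\emptyset$, is where the bookkeeping lives.
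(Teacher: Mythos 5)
Your proof is correct and follows essentially the same route as the paper: the paper's own (two-line) proof likewise observes that each $\hat{v}_T$ is monotone and that $\phi_i(\hat{v}_T)=p_T$, so the monotonicity axiom forces $p_T\geq 0$. Your write-up simply makes explicit the marginal-contribution computation, the cone-membership check, and the $T=\emptyset$ case that the paper leaves implicit.
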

\begin{proof}
	The the game $\hat{v}_T$ is monotone for all $T\in \Star{i}{\Delta}$. Then $\phi_i(\hat{v}_T)=p_T\geq 0$.
\end{proof}


We close this section by giving the main definition of this work.

\begin{definition}
Given a cone $\mathfrak{I}$ in the vector space $\charFun$ of cooperative games on a simplicial complex $\Delta$.
A \emph{probabilistic value} is a linear function $\phi_i$ written in the form $\eqref{eq:012-phi-p_T}$ such that $\{p_T\}_{T\in \Link{i}{\Delta}}$ is a probability distribution on the elements of link of the vertex $i$.
\end{definition}

Of course, Theorem \ref{thm:linearity-axiom-step}, Theorem \ref{thm:la+da} and Theorem \ref{thm:la+da+ma} provides all together a characterization for probabilistic values on the simplicial complex $\Delta$. One only needs to take care of all the requested carrier games in $\mathfrak{I}$.

\section{Group value and Symmetry Axiom}\label{Sec:symmetry-axiom}

To move forward in generalizing the classical theory for probabilistic values, we now introduce the notion of group value, the collection of the individual value functions for all players in the game. In other words:

\begin{definition}
A group value $\operatorname{\phi}$ for the cooperative game $(\Delta, v)$ is the vector $(\phi_1(v), \dots, \phi_n(v))$.
\end{definition}

To introduce the next requirement we need a further step. 
Consider the subgroup $\operatorname{Symm}(\Delta)$ of the symmetric group $S_n$ made by the permutations such that if $T\in \Delta$, then  $\pi T\in \Delta$.
%
Finally, let us denote by $\pi \cdot v$ the permuted characteristic function; this is $(\pi \cdot v): \Delta \rightarrow \mathbb{R}$ such that
\[
	\pi \cdot v(T)=	v(\pi T).
\]

We denote by $\pi_{L,T}^i$ the permutation that switches two set $L$ and $T$ of the same cardinality in $\Link{i}{\Delta}$ and fixes $i$. We define $\pi_{i,j}$ to be the simple transposition $(i,j)$.

\begin{description}
	\item [Symmetry Axiom] 
	Assume that for every permutation $\pi$ in $\operatorname{Symm}(\Delta)$ we have $\pi \cdot v$ belongs to $\mathfrak{I}$.
	Let $\phi$ be a group value for the cooperative game $(\Delta, v)$.
	Then, 
	$\phi_i(v)=\phi_{\pi i}(\pi \cdot v),$
	for every $\pi$ in $\operatorname{Symm}(\Delta)$.
\end{description}


%

Some of the symmetries in $S_n$ have a specific role in next proof. Let us recall them here.

\begin{definition}\label{def:subgroup}
	Let $\subgroupD$ be the subgroup of $S_n$ generated by the permutation $\pi_{L,T}^i$ for all $i$ and for all $L,T\in\Link{i}{\Delta}$ with $|L|=|T|$ and generated by every transposition $\pi_{i,j}$ provided that $\Link{i}{\Delta}\cap \Link{j}{\Delta}\neq \emptyset$.
\end{definition}

\begin{theorem}\label{thm:s.a.}
Let $\Delta$ be a simplicial complex and let $\mathcal{I}$ be a cone of games defined on $\Delta$, containing the carrier games $\mathcal{C}$ and $\hat{\mathcal{C}}$ for $\Link{i}{\Delta}$ for every vertex $i$.
Assume that if $v\in\mathfrak{I}$, then the permuted game $\pi \cdot v$ is also in $\mathfrak{I}$ for every $\pi$ in $\operatorname{Symm}(\Delta)$.

Let $\phi$ be a group value such that for each $i\in [n]$ and for each $v\in\mathfrak{I}$, we can write:
\[
		\phi_i(v)=\sum_{T\in \Link{i}{\Delta}}p_T^i \left(v(T\cup i) - v(T)\right).
\]

\noindent
If $\subgroupD \subseteq \operatorname{Symm}(\Delta)$ and if $\phi$ satisfies the symmetric axiom, then there are real constants $p_t$ with $\mathbf{t\neq 0}, \dots, \operatorname{rank}\Delta - 1$ such that for all players $i$ and all \textbf{non-trivial} $T\in \Link{i}{\Delta}$ one has $p_T^i=p_{|T|}$.
\end{theorem}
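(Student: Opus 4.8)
The plan is to convert the symmetry axiom into a pointwise invariance statement for the coefficients $p_T^i$, and then feed the two families of generators of $\subgroupD$ into it one at a time: the permutations $\pi_{L,T}^i$ will collapse the coefficients of a single player to depend only on $|T|$, and the transpositions $\pi_{i,j}$ will identify these per-cardinality values across players.

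First I would translate the axiom into coefficient language. Recall from the proof of Theorem \ref{thm:la+da+ma} that evaluating $\phi_i$ on the carrier game $\hat{v}_T$ isolates a single coefficient, $\phi_i(\hat v_T)=p_T^i$, since the marginal $\hat v_T(T'\cup i)-\hat v_T(T')$ vanishes unless $T'=T$. Because $(\pi\cdot\hat v_T)(S)=\hat v_T(\pi S)=\hat v_{\pi^{-1}T}(S)$, one has $\pi\cdot\hat v_T=\hat v_{\pi^{-1}T}$, which is again one of the carrier games $\hat{\mathcal C}$ living in $\mathfrak I$, as $\pi^{-1}T\in\Link{\pi^{-1}i}{\Delta}$ and the hypothesis places $\hat{\mathcal C}$ for every link inside $\mathfrak I$. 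Applying the symmetry axiom to $v=\hat v_T$ then produces, for every $\pi\in\operatorname{Symm}(\Delta)$ and every $T\in\Link{i}{\Delta}$, the master relation
\[
 p_T^i=\phi_i(\hat v_T)=\phi_{\pi i}(\pi\cdot\hat v_T)=\phi_{\pi i}(\hat v_{\pi^{-1}T})=p_{\pi^{-1}T}^{\pi i}.
\]
Here I use that $\pi\in\operatorname{Symm}(\Delta)$ carries $\Link{i}{\Delta}$ bijectively onto $\Link{\pi i}{\Delta}$ preserving cardinalities, so the right-hand coefficient is well defined.

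Next I would specialize to the generators of $\subgroupD$, which lies in $\operatorname{Symm}(\Delta)$ by hypothesis. Taking $\pi=\pi_{L,T}^i$, which fixes $i$ and interchanges two equicardinal sets $L,T\in\Link{i}{\Delta}$, the master relation gives $p_L^i=p_T^i$; ranging over all such pairs shows that, for a \emph{fixed} player $i$ and each cardinality $k\ge 1$, the coefficient $p_T^i$ is constant on $\{T\in\Link{i}{\Delta}:|T|=k\}$, a common value I would call $p_k^i$. Taking instead $\pi=\pi_{i,j}$, available whenever $\Link{i}{\Delta}\cap\Link{j}{\Delta}\neq\emptyset$, the relation gives $p_T^i=p_{(i,j)T}^{\,j}$ for every $T\in\Link{i}{\Delta}$; since $|(i,j)T|=|T|$, combining this with the within-player constancy just established yields $p_k^i=p_k^j$ for every nontrivial cardinality $k$ occurring in the link. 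Thus each generating transposition forces the two players to agree on all their nontrivial cardinality coefficients. Chaining these equalities, the value $p_k^i$ becomes independent of $i$; naming it $p_k$ gives $p_T^i=p_{|T|}$ for all players $i$ and all nontrivial $T\in\Link{i}{\Delta}$ with $1\le|T|\le\operatorname{rank}\Delta-1$, as claimed. The empty set is deliberately omitted, because it is the only set of cardinality $0$, so $\pi_{L,T}^i$ produces no relation on $p_\emptyset^i$, and indeed $p_\emptyset^i$ will instead be pinned down by the normalization $\sum_{T\in\Link{i}{\Delta}}p_T^i=1$, which depends on the $f$-vector of $\Link{i}{\Delta}$ and so can vary with $i$.

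The step I expect to be the genuine obstacle is the last one: the conclusion is a \emph{single} global constant $p_k$, so the transposition generators must connect the players densely enough. Concretely, one must check that within the graph on $[n]$ whose edges are the admissible transpositions $\pi_{i,j}$, any two players owning a cardinality-$k$ face in their links are joined by a path, so that the equalities $p_k^i=p_k^j$ propagate. This connectivity is exactly what the generating condition $\Link{i}{\Delta}\cap\Link{j}{\Delta}\neq\emptyset$ is designed to supply; verifying that it does, together with the careful bookkeeping ensuring each $\pi$ really maps links to links in the master relation, is where the argument has to be handled with care rather than by routine computation.
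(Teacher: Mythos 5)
Your proof is correct and follows essentially the same route as the paper: isolate each coefficient by evaluating $\phi_i$ on a carrier game, then push the two families of generators of $\subgroupD$ through the symmetry axiom (the paper's only real deviation is that for facets $F$ of $\Link{i}{\Delta}$ it evaluates on $v_{F\cup i}\in\mathcal{C}$ rather than on $\hat{v}_F$; both isolate $p_F^i$ under the stated hypotheses, and the paper's choice matters only for the weakened assumptions of Remark \ref{rmk:weaker-the-hypothesis}). The connectivity obstacle you single out at the end is in fact vacuous under Definition \ref{def:subgroup} as written: $\emptyset\in\Link{i}{\Delta}$ for every vertex $i$, so $\Link{i}{\Delta}\cap\Link{j}{\Delta}\neq\emptyset$ for \emph{every} pair $i,j$, hence every transposition $\pi_{i,j}$ is a generator of $\subgroupD\subseteq\operatorname{Symm}(\Delta)$ and the graph you describe is complete, making the chaining immediate. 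One small caution: your master relation $p_T^i=p^{\pi i}_{\pi^{-1}T}$ is only well-posed when $\pi^{-1}T\in\Link{\pi i}{\Delta}$ (in general one only gets $\pi^{-1}T\in\Link{\pi^{-1}i}{\Delta}$), but this is harmless here because every generator you actually apply is an involution, for which the two sets coincide.
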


%
%

\begin{proof}
	We first note that the rank of $\Link{i}{\Delta}$ is at most (but not necessarily the same) $\operatorname{rank}\Delta - 1$. 
	  
	
	Fix $i$ and pick two non-trivial sets (but not facets) $L$ and $T$ of the same cardinality in $\Link{i}{\Delta}$. 	From the hypothesis $\pi_{L,T}^i\in \operatorname{Symm}(\Delta)$ and 
	then, because of the symmetry axiom,
	$p_T^i=\phi_i(\hat{v}_{T})=\phi_i(\pi_{L,T}^i \cdot \hat{v}_{T})=\phi_i(\hat{v}_{L})=p_L^i$.

	Now, for every $i$ and consider two facets $F$ and $F'$ of the same cardinality in $\Link{i}{\Delta}$.
	%
	%
	One has $\pi_{F,F'}^i\in \operatorname{Symm}(\Delta)$ and because of the S.A we get, $p_F^i=\phi_i(v_{F\cup i})=\phi_{\pi_{F,F'}^i(i)}(\pi_{F,F'}^i \cdot v_{F\cup i})=\phi_i(v_{F'\cup i})=p_{F'}^i$.

	Finally, consider two distinct player $i$ and $j$ and pick $T$ in $\Link{i}{\Delta}\cap \Link{i}{\Delta}$. Then, the transposition $\pi_{i,j}=(i,j)$ belongs to $\operatorname{Symm}(\Delta)$ and we obtain $p_T^i=\phi_i(\hat{v}_{T})=\phi_{\pi_{i,j}=(i)}(\pi_{i,j} \cdot \hat{v}_{T})=\phi_{j}(\hat{v}_{T})=p_T^j$.
\end{proof}

\begin{remark}\label{rmk:weaker-the-hypothesis}
    We could have request less carrier games in the cone $\mathfrak{I}$: specifically we only needs that 
    the carrier games in $\hat{\mathcal{C}}$ belongs to $\mathfrak{I}$ for every non-facets in the intersection of two links and for every non-facet $T$ in the link with at least another element of the same cardinality (in the same link).
    We also need that the carrier games in $\mathcal{C}$ for every facet in the link with at least another facet of the same cardinality (in the same link).
    
    The requests of the theorem are stronger because we aim to simplify the hypothesis. 
\end{remark}

\begin{remark}
We observe that a priori such constants $p_t$ could be also negative, but if we work with monotone games then we get as well that the $p_t$'s are non-negative.
\end{remark}

As harmless as it could seem, passing from the symmetric group $S_n$ to the subgroup of symmetries $\operatorname{Symm}(\Delta)$ leads us already to requiring that a specific set of symmetries $\subgroupD$ belong to $\operatorname{Symm}(\Delta)$.
%
A more careful look at the last paragraph of the previous proof highlights that the consequences are drastically stronger than in the traditional case. 
In the next section we are going to deal with this issue, together with the problem of assigning a common value $p_0$ for the probability $p_{\emptyset}^i$ of every player.

\subsection{Simplicial complex having pure links}\label{sec:restriction-for-equidim-links}

Let us analyze the implications of Theorem \ref{thm:s.a.}. Assume that all the hypothesis of the previous theorem are satisfied and that for two distinct players $i$ and $j$ the rank of $\Link{i}{\Delta}$ is strictly smaller than the rank of $\Link{j}{\Delta}$, say $r_i < r_j$.
Then, Theorem \ref{thm:s.a.} says that $p^{j}_T$ has to be zero for any subset of cardinality larger than $r_i$. Indeed there exist a common value $p_{|T|}$ for all probabilities $p^{j}_T$ of $j$ joining a cardinality $|T|$ subset $T$. In facts, there are no subsets of cardinality $|T|>r_i$ in $\Link{i}{\Delta}$, because of the rank restriction. 
Thus, $p_{|T|}=0$ seen from the point of view of $i$.

In other words, the symmetry axiom implies that the strategy constrains are shared among all players: in particular the probability of joining a certain coalition of size $|T|$ has to be shared to; If for a player there is no chance of joining a certain coalition of cardinality $|T|$, so it must be also for the other players.

Now, the theory we develop allows $p^{j}_T=0$ even in the case that $T$ is a feasible coalition. On the other hand, in this specific case, if $p^{j}_T=0$, for all subsets of cardinality strictly larger than $k$, then one can simply reconsider the same cooperative game defined on the $k$-skeleton of simplicial complex $\Delta$, that is $\Delta^{\langle k\rangle}\define \{S\in \Delta: |S|\leq k \}$.

Therefore, we are not losing any generality if we assume the symmetry axiom and we work with simplicial complexes \emph{having pure links}:

\begin{definition}\label{def:pure-links-simplicial-complex}
    A simplicial complex $\Delta$ of rank $r$ has \emph{pure links} if the link of every vertex $i$, $\Link{i}{\Delta}$, is pure of rank $r-1$.
\end{definition}

\subsection{Solving a linear system}\label{sec:bit-system-f-vector}

We need to carry this analysis further to answer: is there a common value $p_0$ for the probabilities $p^i_{\emptyset}$?
As done in the end of the proof of Theorem \ref{thm:la+da}, consider the carrier game $v_{i}$ in $\Star{i}{\Delta}$ and observe that $i$ is a dummy player in this game, so 
\[
	\phi_{i}(v_i)=v_i(i)=1.
\]
Using \eqref{eq:012-phi-p_T}, that is also an hypothesis in Theorem \ref{thm:s.a.}, one has
\[
 	\phi_i(v)=\sum_{T\in \Link{i}{\Delta}}p_T \left(v(T\cup i) - v(T)\right)=\sum_{T\in \Link{i}{\Delta}}p_T=1.
\]
So for every $i$,
\[
	p^i_{\emptyset}=1-\sum_{\emptyset \neq T\in \Link{i}{\Delta}}p_T^i.
\] 
For what we have just proven in Theorem \ref{thm:s.a.}, the previous equation becomes
\[
	p^i_{\emptyset}=1-\sum_{\emptyset \neq T\in \Link{i}{\Delta}}p_{|T|}.
\] 
Keeping track of the cardinality of $T$ and using the $f$-vector notation, see equation \eqref{eq:def-f-vector}, we obtain
\[
	p^i_{\emptyset}=1-\sum_{j=0}^{\rank{\Link{i}{\Delta}}}\ffi{j-1}{\Link{i}{\Delta}} p_{j}
\] 
Recalling that $\ffi{-1}{\Link{i}{\Delta}}=1$, see Section \ref{sec:preliminaries}, and imposing that all $p^i_{\emptyset}$ are equal to $p_0$, we finally get  
\begin{equation}
	\sum_{k=0}^{\rank{\Link{i}{\Delta}}}\ffi{k-1}{\Link{i}{\Delta}} p_{k} =1.
\end{equation}
Finally we assume that we work with a simplicial complex of rank $r$ with pure links, see Definition \ref{def:pure-links-simplicial-complex}, and we finally get
\begin{equation}\label{eq:symmetric-for-emptyset}
	\sum_{k=0}^{r-1}\ffi{k-1}{\Link{i}{\Delta}} p_{k} =1.
\end{equation}
Because of the labeling shift, it is more elegant to write this in the scalar product form $$\ff{\Link{i}{\Delta}}\cdot \textbf{p}=1.$$

\begin{TheoM}\label{thm:s-a-unique-p-0}
	Let $\Delta$ be a simplicial complex of rank $r$ \textbf{with pure links} and let $\mathfrak{I}$ be a cone of cooperative games defined on $\Delta$, containing the carrier games $\mathcal{C}$ and $\hat{\mathcal{C}}$ for the star $\Star{i}{\Delta}$ for every vertex $i$.
    Assume that if $v\in\mathfrak{I}$, then the permuted game $\pi \cdot v$ is also in $\mathfrak{I}$ for every $\pi$ in $\operatorname{Symm}(\Delta)$.
    Let $\phi$ be a group value such that for each $i\in [n]$ and for each $v\in\mathfrak{I}$, we can write:
    \[
		\phi_i(v)=\sum_{T\in \Link{i}{\Delta}}p_T^i \left(v(T\cup i) - v(T)\right).
    \]

    If $\subgroupD \subseteq \operatorname{Symm}(\Delta)$ and if $\phi$ satisfies the symmetric axiom, then there are real constants $p_t$ with $\mathbf{t \neq 0}, \dots, \operatorname{rank}\Delta - 1$ such that for every players $i$ and every \emph{non-trivial} $T\in \Link{i}{\Delta}$ one has $p_T^i=p_{|T|}$.

	Moreover, there exists a common value $p_0$ for the probabilities $p_{\emptyset}^i$ if $\textbf{p}=(p_0, p_1, \dots, p_{r-1})$ is the solution of the following system of $n$ linear equations:
	{\em
	\begin{equation}\label{eq:system-for-unique-p}
	    \left\{ \begin{array}{c}
	         \ff{\Link{1}{\Delta}}\cdot \textbf{p}=1,\\
	         \ff{\Link{2}{\Delta}}\cdot \textbf{p}=1,\\
	          \vdots \\
	         \ff{\Link{n}{\Delta}}\cdot \textbf{p}=1,
	    \end{array} \right.
	\end{equation}}
	where each equation has the form in \eqref{eq:symmetric-for-emptyset}. 
\end{TheoM}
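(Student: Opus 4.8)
The plan is to recognize that the first assertion is verbatim Theorem \ref{thm:s.a.}, while the ``Moreover'' clause merely repackages, as a linear system, the normalization already imposed by the dummy player axiom. First I would verify that the hypotheses of Theorem \ref{thm:s.a.} are available in the present setting: as collections of faces one has $\Link{i}{\Delta}\subseteq \Star{i}{\Delta}$, so the carrier games $\mathcal{C}$ and $\hat{\mathcal{C}}$ assumed for $\Star{i}{\Delta}$ in particular supply those for $\Link{i}{\Delta}$, and the inclusion $\subgroupD\subseteq \operatorname{Symm}(\Delta)$ together with the symmetry axiom hold by assumption. Invoking Theorem \ref{thm:s.a.} then directly yields the real constants $p_t$ with $p_T^i=p_{|T|}$ for every non-trivial $T\in\Link{i}{\Delta}$, which is exactly the first statement.

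For the second statement I would first extract the normalization $\sum_{T\in\Link{i}{\Delta}}p_T^i=1$. Since the singleton $\{i\}$ lies in $\Star{i}{\Delta}$, the carrier game $v_i$ belongs to $\mathfrak{I}$, and $i$ is a dummy player for it, so the dummy axiom gives $\phi_i(v_i)=v_i(i)=1$. Evaluating the assumed expansion \eqref{eq:012-phi-p_T} on $v_i$ and noting that $v_i(T\cup i)=1$ while $v_i(T)=0$ for every $T\in\Link{i}{\Delta}$ (because $i\notin T$), I obtain $\sum_{T\in\Link{i}{\Delta}}p_T^i=1$ for each player $i$, exactly as in the closing computation of Theorem \ref{thm:la+da}.

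Next I would isolate the empty-set contribution and substitute the cardinality-dependent values. Writing $p_{\emptyset}^i=1-\sum_{\emptyset\neq T\in\Link{i}{\Delta}}p_T^i$ and replacing each $p_T^i$ by $p_{|T|}$, I would group the nonempty faces of $\Link{i}{\Delta}$ by cardinality; since the number of faces of cardinality $j$ is $\ffi{j-1}{\Link{i}{\Delta}}$ and the links are pure of rank $r-1$, this becomes $p_{\emptyset}^i=1-\sum_{j=1}^{r-1}\ffi{j-1}{\Link{i}{\Delta}}p_j$. Imposing a single common value $p_{\emptyset}^i=p_0$ and recalling $\ffi{-1}{\Link{i}{\Delta}}=1$, so that the term $\ffi{-1}{\Link{i}{\Delta}}p_0$ accounts for the empty set, turns this into the $i$-th equation $\sum_{k=0}^{r-1}\ffi{k-1}{\Link{i}{\Delta}}p_k=1$ of \eqref{eq:symmetric-for-emptyset}, that is $\ff{\Link{i}{\Delta}}\cdot\textbf{p}=1$. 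Collecting these identities over $i=1,\dots,n$ produces the system \eqref{eq:system-for-unique-p}, and a common value $p_0$ exists precisely when the vector $\textbf{p}=(p_0,p_1,\dots,p_{r-1})$ solves it.

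The computational content here is light, and no genuine structural obstacle arises beyond Theorem \ref{thm:s.a.}; the one delicate point is the $f$-vector index bookkeeping, in particular the way the empty-set term $\ffi{-1}{\Link{i}{\Delta}}p_0$ is folded into the linear equation so that the condition $p_{\emptyset}^i=p_0$ is translated faithfully into $\ff{\Link{i}{\Delta}}\cdot\textbf{p}=1$. I expect this reindexing, rather than any conceptual difficulty, to be the main thing to get exactly right.
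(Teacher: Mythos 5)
Your proposal is correct and follows essentially the same route as the paper: the first assertion is obtained by invoking Theorem \ref{thm:s.a.} (whose hypotheses are indeed supplied by the star-level carrier games), and the ``Moreover'' clause is derived exactly as in the paper's discussion in Section \ref{sec:bit-system-f-vector}, namely by evaluating $\phi_i$ on the carrier game $v_i$ to get the normalization $\sum_{T\in \Link{i}{\Delta}}p_T^i=1$ (as at the end of Theorem \ref{thm:la+da}), substituting $p_T^i=p_{|T|}$, grouping by cardinality via the $f$-vector, and folding the empty-set term $\ffi{-1}{\Link{i}{\Delta}}p_0$ into the equation $\ff{\Link{i}{\Delta}}\cdot \textbf{p}=1$. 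Your index bookkeeping is in fact cleaner than the paper's intermediate display (whose summation bounds contain a typo), and you share with the paper the same implicit appeal to the dummy-player property of $v_i$ to obtain $\phi_i(v_i)=1$.
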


\subsection{The Shapley complexes}\label{sec:restriction-same-f-vector}

The first thing to observe is that many of the equations in \eqref{eq:system-for-unique-p} are actually the same.
This is a side effect of the hypothesis $\subgroupD \subseteq \operatorname{Symm}(\Delta)$.

\begin{proposition}
	Let the transposition $\pi_{i,j}$ belong to $\operatorname{Symm}(\Delta)$ if $\Link{i}{\Delta}\cap \Link{j}{\Delta}\neq \emptyset$. Then $\Link{i}{\Delta}$ is isomorphic to $\Link{j}{\Delta}$.
	Moreover, {\em $\ff{\Link{i}{\Delta}}=\ff{\Link{j}{\Delta}}$}.
\end{proposition}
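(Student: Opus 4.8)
The plan is to show that the transposition $\pi_{i,j}$, which by hypothesis lies in $\operatorname{Symm}(\Delta)$, restricts to a simplicial isomorphism between the two links; the equality of $f$-vectors is then immediate. Being in $\operatorname{Symm}(\Delta)$ means exactly that $\pi_{i,j}$ is an automorphism of $\Delta$, i.e. $T\in\Delta$ if and only if $\pi_{i,j}T\in\Delta$. Since $\pi_{i,j}$ is a relabeling of vertices it automatically preserves inclusions and cardinalities, so the only work is to pin down that it sends $\Link{i}{\Delta}$ onto $\Link{j}{\Delta}$.

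First I would check that $\pi_{i,j}$ carries $\Link{i}{\Delta}$ into $\Link{j}{\Delta}$. Take $T\in\Link{i}{\Delta}$, so by Definition \ref{def:link} we have $i\notin T$ and $T\cup i\in\Delta$. Because $\pi_{i,j}(i)=j$ and $i\notin T$, the element $j$ does not appear in $\pi_{i,j}(T)$, and since $\pi_{i,j}$ is an automorphism, $\pi_{i,j}(T\cup i)=\pi_{i,j}(T)\cup j\in\Delta$. Hence $\pi_{i,j}(T)\in\Link{j}{\Delta}$. The one point I would verify explicitly is the case $j\in T$: here applying $\pi_{i,j}$ brings $i$ into the image set and removes $j$, but the two required conditions $j\notin\pi_{i,j}(T)$ and $\pi_{i,j}(T)\cup j\in\Delta$ still hold, so the conclusion is unaffected.

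Next I would use that $\pi_{i,j}=\pi_{j,i}$ is an involution, $\pi_{i,j}^2=\mathrm{id}$. Running the previous paragraph with the roles of $i$ and $j$ exchanged shows $\pi_{i,j}$ also maps $\Link{j}{\Delta}$ into $\Link{i}{\Delta}$, and the two restrictions are mutually inverse. A bijection of faces induced by a vertex permutation respects inclusion, so it is a simplicial isomorphism $\Link{i}{\Delta}\cong\Link{j}{\Delta}$. For the \emph{moreover} part I would invoke that isomorphic simplicial complexes have identical $f$-vectors: the isomorphism restricts to a bijection on the faces of each fixed cardinality, giving $\ffi{k}{\Link{i}{\Delta}}=\ffi{k}{\Link{j}{\Delta}}$ for every $k$, hence $\ff{\Link{i}{\Delta}}=\ff{\Link{j}{\Delta}}$.

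The main (and only mild) obstacle is the membership bookkeeping when $j\in T$ (and symmetrically $i\in T$ in the inverse direction), where one must track how $i$ and $j$ are swapped in and out of the set; everything else is the standard fact that an automorphism of $\Delta$ sends the link of a vertex onto the link of its image.
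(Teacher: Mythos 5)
Your proof is correct and takes essentially the same approach as the paper: in both cases the isomorphism is the map $T \mapsto \pi_{i,j}T$, which fixes $T$ when $j \notin T$ and sends $T$ to $(T\cup i)\setminus j$ when $j \in T$ --- exactly the two cases spelled out by the paper's explicit map $\phi_{(i,j)}$. Your extra bookkeeping (involutivity giving bijectivity, preservation of cardinality giving equal $f$-vectors) simply makes explicit what the paper leaves implicit.
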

\begin{proof}
	A face $T$ belongs to $\Link{i}{\Delta}$ if and only if $T\cup i\in \Delta$ and $i\notin T$. 
	Note $(i,j)T=T$ and $(i,j)T\cup i=T\cup j$. 
	Moreover, if $j\in T$ and $S=(T\cup i)\setminus j$, then $(i,j)T=S$ and, of course, $(i,j)S=T$.
	
	Thus the isomorphism in the statement is $\phi_{(i,j)}$ induced by $(i,j)$:
	\[
	\begin{array}{cccc}
		\phi_{(i,j)}:&\Link{i}{\Delta} & \rightarrow & \Link{j}{\Delta}\\
		 &T	&\mapsto & \begin{cases}
		                    T & \mbox{ if } j\notin T\\
		                    (T\cup i)\setminus j & \mbox{ if } j\in T
		               \end{cases}.
	\end{array}
    \]
	
\end{proof}


If the simplicial complex has enough faces then it may very well happen that several links intersect and we obtain that all $\ff{\Link{i}{\Delta}}$ are actually the same. This is anyhow the case we are interested.
Indeed we would like to understand the class of simplicial complex such that the system of equation in \eqref{eq:system-for-unique-p} has at least a solution.
%

\begin{definition}\label{def:Shapley-simplicial-complex}
	A simplicial complex $\Delta$ of rank $r$ on $n$ verticies is  \textbf{Shapley} of vector $\textbf{s}=(s_0,\dots, s_{r-1})$, if $\ff{\Link{i}{\Delta}}=\textbf{s}$ for every vertex $i$.
\end{definition}


\begin{example} 
The full simplex on $n$ verticies $\Delta=2^{[n]}$ is Shapley of vector $\textbf{s}$, with $s_i={n \choose i}$ for $i=0, \dots, n-1$.
\end{example}

\begin{example} 
The boundary of a simplex on $n$ verticies $\Delta=\partial(2^{[n]})$ is Shapley of vector $\textbf{s}$, with $s_i={n \choose i}$, for $i=0, \dots, n-2$. Similarly, $\Delta^{\langle k\rangle}$ is a Shapley complex of vector $\left({n \choose 0}, {n \choose 1}, \dots, {n \choose k-1}\right)$.
\end{example}

\begin{example} 
Let $G$ be a graph on $n$ verticies, then the $f$-vector of each links reduce to only one non-trivial information: the number of edges to the selected vertex. Thus, $G$ is Shapley of vector $(1, s)$ if and only if $G$ is $s$-regular.
\end{example}

Ee can ensure the existence of at least a solution of the system \eqref{eq:system-for-unique-p} for Shapley simplicial complexes.

\begin{theorem}\label{thm:at-least-a-solution-shapley-complex}
		Let $\Delta$ be a \textbf{Shapley} simplicial complex of vector $\textbf{s}=(s_0, \dots, s_{r-1})$ and and let $\mathfrak{I}$ be a cone of games defined on $\Delta$, containing the carrier games $\mathcal{C}$ and $\hat{\mathcal{C}}$ for the star $\Star{i}{\Delta}$ for every player $i$.
    Assume that if $v\in\mathfrak{I}$, then the permuted game $\pi \cdot v$ is also in $\mathfrak{I}$ for every $\pi$ in $\operatorname{Symm}(\Delta)$.

    Let $\phi$ be a group value such that for each $i\in [n]$ and for each $v\in\mathfrak{I}$, we can write:
    \[
		\phi_i(v)=\sum_{T\in \Link{i}{\Delta}}p_T^i \left(v(T\cup i) - v(T)\right).
    \]

    \noindent
    If $\subgroupD \subseteq \operatorname{Symm}(\Delta)$ and if $\phi$ satisfies symmetric axiom, then there are real constants $p_t$ with $\mathbf{t = 0}, \dots, r - 1$ such that $p_T^i=p_{|T|}$ for all players $i$ and all $T\in \Link{i}{\Delta}$.
\end{theorem}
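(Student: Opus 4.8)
The plan is to leverage Theorem~\ref{thm:s.a.}, which already furnishes common constants $p_1,\dots,p_{r-1}$ with $p_T^i=p_{|T|}$ for every player $i$ and every \emph{non-trivial} $T\in\Link{i}{\Delta}$; the only thing left to establish is a single common value $p_0$ for the empty-set probabilities $p_\emptyset^i$. First I would check that all hypotheses of Theorem~\ref{thm:s.a.} are in force: the carrier games for $\Star{i}{\Delta}$ lie in $\mathfrak{I}$ and hence so do those for $\Link{i}{\Delta}\subseteq\Star{i}{\Delta}$, the cone is stable under $\operatorname{Symm}(\Delta)$, and $\subgroupD\subseteq\operatorname{Symm}(\Delta)$. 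Applying that theorem verbatim produces the constants $p_t$ for $t=1,\dots,r-1$.

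Next I would recover the normalization identity for each player. Exactly as in the closing paragraph of Theorem~\ref{thm:la+da}, the vertex $i$ is a dummy player in the carrier game $v_i\in\mathcal{C}\subseteq\mathfrak{I}$, so evaluating the assumed expression for $\phi_i$ on $v_i$ gives $\sum_{T\in\Link{i}{\Delta}}p_T^i=\phi_i(v_i)=v_i(i)=1$. Isolating the empty-set term and substituting $p_T^i=p_{|T|}$ from Theorem~\ref{thm:s.a.} for the non-trivial faces, then grouping those faces by cardinality, yields
\[
	p_\emptyset^i=1-\sum_{\emptyset\neq T\in\Link{i}{\Delta}}p_{|T|}=1-\sum_{k=1}^{r-1}\ffi{k-1}{\Link{i}{\Delta}}\,p_k.
\]

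The decisive step is to invoke the Shapley hypothesis. Since $\Delta$ is Shapley of vector $\textbf{s}$, one has $\ff{\Link{i}{\Delta}}=\textbf{s}$ for every vertex $i$, so each count $\ffi{k-1}{\Link{i}{\Delta}}$ is independent of $i$. The right-hand side of the display above therefore carries no dependence on $i$, and consequently all the values $p_\emptyset^i$ coincide. Defining this common value to be $p_0\define 1-\sum_{k=1}^{r-1}\ffi{k-1}{\Link{i}{\Delta}}\,p_k$ completes the list $p_0,p_1,\dots,p_{r-1}$ and gives $p_T^i=p_{|T|}$ for all players $i$ and all $T\in\Link{i}{\Delta}$, now including $T=\emptyset$.

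The only delicate point is conceptual rather than computational: one must recognize that the Shapley condition is precisely the hypothesis that collapses the $n$ normalization equations $\ff{\Link{i}{\Delta}}\cdot\textbf{p}=1$ appearing in Theorem~\ref{thm:s-a-unique-p-0} into one and the same equation, which is what simultaneously guarantees solvability of the linear system and forces the player-independence of $p_\emptyset^i$. No further obstruction arises once this identification is made.
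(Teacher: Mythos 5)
Your proposal is correct and takes essentially the same route as the paper: both proofs rest on the observation that the Shapley hypothesis $\ff{\Link{i}{\Delta}}=\textbf{s}$ collapses the $n$ normalization equations of the system \eqref{eq:system-for-unique-p} into a single equation, which is what produces the common value $p_0$. If anything, your write-up is tighter than the paper's one-line proof, which merely exhibits the particular solution $p_k=\frac{1}{r}\frac{1}{s_k}$ of that single equation, whereas you show directly that the actual empty-set probabilities $p_{\emptyset}^i$ of the given group value must coincide; the one caveat (shared with the paper's own derivation in Section \ref{sec:bit-system-f-vector}) is that the normalization $\sum_{T\in \Link{i}{\Delta}}p_T^i=1$ is obtained by applying the dummy player axiom to the carrier game $v_i$, an assumption not explicitly listed among the theorem's hypotheses.
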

\begin{proof}
	We only need to provide a solution for the system \eqref{eq:system-for-unique-p}, that for Shapley complexes is made of only one equation. A solution is:
	\[
		p_{k}=\frac{1}{r}\frac{1}{s_k}.
	\]
\end{proof}

\section{The efficiency axiom and the Shapley value}\label{sec:Shapley}

The Shapley values are probabilistic values for the cooperative game $(2^{[n]}, v)$ arising from the following point of view:
\begin{itemize}
	\item The player $i$ joins a coalitions of different sizes with the same probability;
	\item All coalitions of the same size are equally likely. 
\end{itemize}
Thus, the player has $n$ possibilities to choose the size of a coalition (the joint coalitions may have cardinality $k= 0\leq k\leq n-1$) and, further, there are ${n-1 \choose k}$ choices among all sets (coalitions) of cardinality $k$ among the set of other players $[n]\setminus i$.
Therefore, one defines the Shapley values for the player $i$ as 
\begin{equation}\label{eq:shapley-classical-expression}
    \operatorname{Shapley}_i(v)=\sum_{T\subseteq [n]\setminus i} \frac{1}{n}\frac{|T|!(n-|T|-1)!}{(n-1)!} (v(T\cup i)-v(T)).
\end{equation}

To state the classical Shapley Theorem, we need to introduce the last requirement: efficiency.
The individual value $\phi_i$ associated to a cooperative game $(2^{[n]}, v)$ measures the contribution of the player $i$ in the game. We have collected such values all together in the group value $\phi=(\phi_1, \phi_2, \dots, \phi_n)$. The assessment is optimistic (w. r. to $v$) if the sum of the payoff vector $\sum_i \phi_i(v)$ is greater than the $v([n])$, the worth of the grand coalition. If the contrary happens, then $\phi$ is pessimistic (w. r. to $v$).
We are interesting in group values that are nor optimistic or pessimistic.

\begin{description}
	\item [Efficiency Axiom] 
	Let $\phi$ be a group value for the cooperative game $(2^{[n]}, v)$ in the cone $\mathfrak{I}$.
	For every cooperative game $(2^{[n]}, v)$ in $\mathfrak{I}$, one has $\sum_{i=1}^n \phi_i(v)=v([n])$. 
\end{description}

Despite this axiom might seem artificial, this is the missing piece to characterize the Shapley values in the traditional setting.

\begin{ShapleyThm*}
	Let $\Delta=2^{[n]}$ and let $\mathfrak{I}$ be a cone of cooperative games containing the carrier games $\mathcal{C}$ and $\hat{\mathcal{C}}$ for $N\setminus i$.
	Assume that if $v\in\mathfrak{I}$, then the permuted game $\pi \cdot v$ is also in $\mathfrak{I}$ for every permutation $\pi$ of $[n]$.

	Let $\phi$ be a group value. If each $\phi_i$ satisfies the linearity axiom, the dummy player axiom 
	and if the symmetric axiom and the efficiency axiom hold for the group value $\phi$, then for every cooperative game in $\mathfrak{I}$ and every $i$ in $[n]$, 
	\[
		\phi_i(v)=\operatorname{Shapley}_i(v).
	\]
\end{ShapleyThm*}

If $\Delta$ is not the full simplex, then the the grand coalition $[n]$ is a forbidden coalition and, thus, one needs to study what could take the place of the the total number of payoff $\vtot=v([n])$.
This study is the center of attention of the manuscript \cite{Martino-Efficiency} and for what concerns us, we only need to recall Theorem 2.1 of \cite{Martino-Efficiency}.

\begin{theorem}[Theorem 2.1 of \cite{Martino-Efficiency}]
\label{thm:generic-efficiency}
Let $\Delta$ be a simplicial complex and let $\mathfrak{I}$ be a cone of cooperative games defined on $\Delta$ containing the carrier games $\mathcal{C}$ and $\hat{\mathcal{C}}$ for $\Delta$.

Let $\phi$ be a group value on $\mathfrak{I}$ such that for each $i\in [n]$ and assume that for each $v\in\mathfrak{I}$, we can write:
\[
		\phi_i(v)=\sum_{T\in \Link{i}{\Delta}}p_T^i \left(v(T\cup i) - v(T)\right).
\]

\noindent
The group value $\phi$ satisfies 
$$\sum_{i\in [n]}\phi_i(v)=\sum_{T\in \Delta} a_T v(T)$$
if and only if for all non-facet $T$ in $\Delta$
\begin{equation*}
\sum_{i\in T}p^i_{T\setminus i}-\sum_{j, T\in \Link{j}{\Delta}}p_T^j=a_T,
\end{equation*}
and for every facet $F$ of $\Delta$
\begin{equation*}
	\sum_{i\in F}p^i_{F\setminus i}=a_F.
\end{equation*}
\end{theorem}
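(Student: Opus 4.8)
The plan is to expand the left-hand side, collect the coefficient multiplying each value $v(S)$ for $S\in\Delta$, and then match these coefficients against the prescribed $a_S$. Write $L(v)\define\sum_{i\in[n]}\phi_i(v)$ and substitute the assumed expression, so that
\[
L(v)=\sum_{i\in[n]}\sum_{T\in\Link{i}{\Delta}}p_T^i\bigl(v(T\cup i)-v(T)\bigr).
\]
Since every summand is a real multiple of a single value $v(S)$ with $S\in\Delta$, the functional $L$ has the form $L(v)=\sum_{S\in\Delta}c_S\,v(S)$, and the whole statement reduces to identifying the coefficients $c_S$ and comparing them with the $a_S$.

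Next I would reindex to find $c_S$. A fixed value $v(S)$ is produced in exactly two disjoint ways. From a positive term $+p_T^i\,v(T\cup i)$ with $T\cup i=S$: this forces $i\in S$ and $T=S\setminus i$, and since $S\in\Delta$ one checks $S\setminus i\in\Link{i}{\Delta}$, so each $i\in S$ contributes $+p^i_{S\setminus i}$. From a negative term $-p_T^i\,v(T)$ with $T=S$: this requires $S\in\Link{i}{\Delta}$, i.e.\ $i\notin S$ and $S\cup i\in\Delta$, so each such extending vertex contributes $-p_S^i$. Hence
\[
c_S=\sum_{i\in S}p^i_{S\setminus i}-\sum_{j:\,S\in\Link{j}{\Delta}}p_S^j .
\]
The facet/non-facet dichotomy then appears automatically: if $S=F$ is a facet, no $i\notin F$ satisfies $F\cup i\in\Delta$, so the second sum is empty and $c_F=\sum_{i\in F}p^i_{F\setminus i}$; for a non-facet both sums survive, giving exactly the two displayed conditions with $a_S$ replaced by $c_S$.

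Then I would dispatch both implications. The direction $(\Leftarrow)$ is immediate: if $c_S=a_S$ for every $S$, then $L(v)=\sum_S c_S v(S)=\sum_S a_S v(S)$ for all $v$, in particular on $\mathfrak{I}$. For $(\Rightarrow)$, I would exploit that $\mathfrak{I}$ contains the carrier games $\mathcal{C}$ and $\hat{\mathcal{C}}$ for $\Delta$, exactly as in the proof of Theorem \ref{thm:linearity-axiom-step}. Setting $R(v)\define\sum_S a_S v(S)$ and using the indicator $\mathbbm{1}_S=v_S-\hat{v}_S$, linearity of $L$ and $R$ gives $L(\mathbbm{1}_S)=c_S$ and $R(\mathbbm{1}_S)=a_S$. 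Since $v_S,\hat{v}_S\in\mathfrak{I}$ and $L=R$ there, we obtain $c_S=a_S$ for every $S\neq\emptyset$, which are precisely the asserted identities. The empty set needs no attention, because $v(\emptyset)=0$ makes its coefficient invisible on both sides.

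The only step requiring genuine care is the reindexing: one must verify that the two families of terms producing $v(S)$ are disjoint and exhaustive, that a single summand $p_T^i(v(T\cup i)-v(T))$ feeds its positive part into $c_{T\cup i}$ and its negative part into $c_{T}$ (with $T\cup i\neq T$, so no cancellation occurs inside a summand), and that the clause ``$S\in\Link{j}{\Delta}$'' is vacuous precisely when $S$ is a facet. Everything else is formal linearity together with the spanning property of the carrier games.
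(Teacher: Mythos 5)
Your argument is correct: the reindexing that yields $c_S=\sum_{i\in S}p^i_{S\setminus i}-\sum_{j:\,S\in\Link{j}{\Delta}}p_S^j$ is valid (no cancellation inside a summand since $i\notin T$, the two families of terms are disjoint and exhaustive, and the second sum is empty exactly when $S$ is a facet), and evaluating against $\mathbbm{1}_S=v_S-\hat{v}_S$ legitimately pins down the coefficients, since both sides are linear functionals agreeing on $\mathfrak{I}$, the carrier games lie in $\mathfrak{I}$ by hypothesis, and the functionals $v\mapsto v(S)$ for nonempty $S\in\Delta$ are independent, while $a_\emptyset$ is invisible because $v(\emptyset)=0$. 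There is no internal proof to compare against—the statement is imported verbatim as Theorem 2.1 of \cite{Martino-Efficiency} and left unproved in this paper—but your coefficient-extraction argument is the natural one and uses exactly the indicator-game technique this paper itself employs in Theorem \ref{thm:linearity-axiom-step} and in the proof of the Generalized Shapley Theorem.
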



%

\subsection{The Shapley values for simplicial complexes}
Following the same probabilistic argument we can easily generalize the Shapley value to every simplicial complex $\Delta$ by observing that, under the same prospective:
\begin{itemize}
    \item coalitions may have cardinality $k$, $0\leq k\leq r_i\define \operatorname{rank}\Link{i}{\Delta}$;
    \item the $f$-vector component $\ffi{k-1}{\Link{i}{\Delta}}$ provides the number of faces of a specific cardinality $k$,  see \eqref{eq:def-f-vector}.
\end{itemize}
%
Hence, we define the Shapley value for the cooperative game $v$ on a simplicial complex $\Delta$ as
\begin{equation}\label{eq:Shapley-def-simplicial-complex}
	\operatorname{Shapley}^{\Delta}_i(v)=\frac{1}{r_i+1} \sum_{T\in \Link{i}{\Delta}}  \frac{1}{\ffi{|T|-1}{\Link{i}{\Delta}}} (v(T\cup i)-v(T)).
\end{equation}

\noindent
We note that if $\Delta$ is a simplex, than $f_{|T|-1}$ is precisely $\frac{|T|!(n-|T|-1)!}{(n-1)!}$, and $r_i+1=n$; so $\operatorname{Shapley}^{\Delta}_i(v)=\operatorname{Shapley}_i(v)$ in the classical sense.


Let us recall a few relevant facts we will be useful for the main theorem in this section.

\vspace{0.1cm}
\textbf{i)} If $\mathfrak{I}$ is a cone of cooperative game over a simplicial complex $\Delta$ of rank $r$ with pure links fulfilling the symmetry axiom then without loss of generality we may assume that each link is pure and has the same rank, precisely $r_i=r-1$ for all $i\in [n]$, see Section \ref{sec:restriction-for-equidim-links}.

\vspace{0.1cm}
\textbf{ii)} Let the group value $\phi$ be made by Shapley values in \eqref{eq:Shapley-def-simplicial-complex}, and let $\Delta$ be a Shapley simplicial complex  of vector $\textbf{s}=(s_0, \dots, s_{r-1})$, that is $\ffi{k-1}{\Link{i}{\Delta}}=s_k$ for all $i\in [n]$, see Definition \ref{def:Shapley-simplicial-complex}.
Then, 
\[
		\operatorname{Shapley}^{\Delta}_i(v)=\frac{1}{r} \sum_{T\in \Link{i}{\Delta}}  \frac{1}{s_{|T|}} (v(T\cup i)-v(T)).
\]

\vspace{0.1cm}
\textbf{iii)} Moreover, we define a player $j$ to be an extension for the coalition $T$ if $j$ can join $T$ for the cooperative game:
\begin{equation*}
	\operatorname{Ext}(T)=\{j\in [n]\setminus T: T\cup j \in \Delta \};
\end{equation*}
in addition, $\operatorname{ext}(T)=|\operatorname{Ext}(T)|$.

\begin{lemma}
	Let $\Delta$ be a simplicial complex of rank $r$. Then, $\operatorname{Ext}(T)=\{j\in [n]: T\in \Link{j}{\Delta}\}$ and $\operatorname{ext}(T)=\ffi{0}{\Link{T}{\Delta}}$.
\end{lemma}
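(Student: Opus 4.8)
The plan is to prove both equalities by directly unwinding the relevant definitions; no machinery beyond the downward closure of simplicial complexes is required.

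First I would establish the set equality $\operatorname{Ext}(T)=\{j\in [n]: T\in \Link{j}{\Delta}\}$ by comparing membership conditions on the two sides. By definition, $j\in\operatorname{Ext}(T)$ precisely when $j\in[n]\setminus T$ and $T\cup j\in\Delta$. On the other hand, Definition \ref{def:link} gives $T\in\Link{j}{\Delta}$ exactly when $j\notin T$ and $T\cup j\in\Delta$; here the auxiliary requirement $T\in\Delta$ is automatic, since $T\subseteq T\cup j$ and $\Delta$ is closed under taking subsets. These two systems of conditions coincide, so the two sets are equal.

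For the second equality I would invoke the definition of the $f$-vector in \eqref{eq:def-f-vector}: $\ffi{0}{\Link{T}{\Delta}}$ counts the cardinality-one faces of $\Link{T}{\Delta}$. A singleton $\{j\}$ lies in $\Link{T}{\Delta}$ exactly when $T\cap\{j\}=\emptyset$ and $T\cup\{j\}\in\Delta$, which is verbatim the condition $j\in\operatorname{Ext}(T)$. Thus the map $j\mapsto\{j\}$ is a bijection between $\operatorname{Ext}(T)$ and the singleton faces of $\Link{T}{\Delta}$, and taking cardinalities yields $\operatorname{ext}(T)=\ffi{0}{\Link{T}{\Delta}}$.

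There is no genuine obstacle in this argument; it is a routine definition-chase. The only point deserving care is the implicit membership $T\in\Delta$ hidden in the link definition, which is resolved by the downward closure of $\Delta$, together with the bookkeeping that identifies the vertices of $\Link{T}{\Delta}$ with the extensions of $T$.
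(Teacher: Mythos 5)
Your proof is correct and takes essentially the same approach as the paper: the paper's own proof is a one-line appeal to the definitions of $\operatorname{Ext}(T)$, the link, and the $f$-vector, and your argument is precisely that definition-chase carried out explicitly (including the downward-closure point and the bijection $j\mapsto\{j\}$ between extensions and singleton faces of $\Link{T}{\Delta}$). There is nothing to correct.
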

\begin{proof}
	The statement follows directly from the definitions of extension of $T$, link of $j$ and $f$-vector.
\end{proof}

%

We are therefore ready to provide an axiomatic description of the Shapley values for simplicial complexes.

\begin{TheoM}[Generalized Shapley's Theorem]
	Let $\Delta$ be a simplicial complex of rank $r$ with pure links and let $\mathfrak{I}$ be a cone of cooperative games on $\Delta$ containing the carrier games $\mathcal{C}$ and $\hat{\mathcal{C}}$ for the star $\Star{i}{\Delta}$ for every vertex $i$.
Assume that if $v\in\mathfrak{I}$, then the permuted game $\pi \cdot v$ is also in $\mathfrak{I}$ for every permutation $\pi$ in $\operatorname{Symm}(\Delta)$.

Let $\phi$ be a group value and assume that each $\phi_i$ satisfies the linearity axiom, the dummy player axiom. 
Assume also that 
the group value fulfills the symmetric axiom with the further constrain that every player enters the games with probability $p_0$, that is there exists a solution $\textbf{p}=(p_0, p_1, \dots, p_{r-1})$ for the linear system $n$ equations:
{\em
	\begin{equation*}
        \left\{ \begin{array}{c}
	         \ff{\Link{1}{\Delta}}\cdot \textbf{p}=1,\\
	         \ff{\Link{2}{\Delta}}\cdot \textbf{p}=1,\\
	          \vdots \\
	         \ff{\Link{n}{\Delta}}\cdot \textbf{p}=1,
	    \end{array} \right.
	\end{equation*}}
	where each equation has the form in  \eqref{eq:symmetric-for-emptyset}.

Then, every individual value is the Shapley value in equation \eqref{eq:Shapley-def-simplicial-complex} if and only if $\Delta$ is $\textbf{s}$-Shapley with $\textbf{s}=(s_0,\dots, s_{r-1})$ and the group value satisfies the following efficiency scenario:
\begin{equation}\label{eq:shapley-efficiency-condition}
		\sum_{i\in [n]}\phi_i(v)=\sum_{F\in \FacetsD} %
	\left( \frac{1}{s_{r-1}} \right)	v(F) 
	+\frac{1}{r}\sum_{T\in \Delta, |T|<r} 
	\left(\frac{|T|}{s_{|T|-1}} -\frac{\operatorname{ext}(T)}{s_{|T|}}\right) 
	v(T).
\end{equation}
\end{TheoM}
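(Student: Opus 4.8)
The plan is to reduce the whole statement to a comparison of coefficients, using the representation of $\phi_i$ forced by the earlier axioms together with the efficiency decomposition of Theorem \ref{thm:generic-efficiency}. First I would record a structural fact: since $\Delta$ has pure links of rank $r-1$, the complex $\Delta$ is itself pure of rank $r$. Indeed, if $F$ is a facet and $i\in F$, then $F\setminus i$ is a facet of $\Link{i}{\Delta}$, so $|F\setminus i|=r-1$ and $|F|=r$; in particular faces of every cardinality $0,1,\dots,r$ occur as subsets of facets. Under the present hypotheses, Theorem \ref{thm:linearity-axiom-step}, Theorem \ref{thm:la+da} and Theorem \ref{thm:s-a-unique-p-0} apply, so each individual value may be written as $\phi_i(v)=\sum_{T\in \Link{i}{\Delta}}p_{|T|}\bigl(v(T\cup i)-v(T)\bigr)$ with one family of constants $p_0,\dots,p_{r-1}$ common to all players and satisfying $\ff{\Link{i}{\Delta}}\cdot\mathbf p=1$. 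Because the carrier games span $\charFun$, two such linear functionals of $v$ coincide if and only if their coefficients coincide, so throughout the argument identities of the form $\sum_T a_T v(T)=\sum_T a'_T v(T)$ may be read off coefficient by coefficient.

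For the forward implication I would assume $\phi_i=\operatorname{Shapley}^{\Delta}_i$ and compare the two expansions. The value in \eqref{eq:Shapley-def-simplicial-complex} carries the coefficient $\tfrac{1}{r\,\ffi{|T|-1}{\Link{i}{\Delta}}}$ on the marginal contribution of $T$, while the symmetric form above carries the coefficient $p_{|T|}$, which is independent of $i$. Since each pure link has faces of every cardinality $0,\dots,r-1$, equating these for each cardinality and each vertex forces $\ffi{k-1}{\Link{i}{\Delta}}$ to be independent of $i$; this is exactly the assertion that $\Delta$ is $\mathbf s$-Shapley with $s_k=\ffi{k-1}{\Link{i}{\Delta}}$, and then $p_k=\tfrac{1}{r s_k}$ as in Theorem \ref{thm:at-least-a-solution-shapley-complex}. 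The efficiency scenario now follows from Theorem \ref{thm:generic-efficiency}: for a facet $F$ the coefficient is $\sum_{i\in F}p_{r-1}=r\,p_{r-1}=\tfrac{1}{s_{r-1}}$, and for a non-facet $T$ it is $\sum_{i\in T}p_{|T|-1}-\sum_{j:\,T\in\Link{j}{\Delta}}p_{|T|}=|T|\,p_{|T|-1}-\operatorname{ext}(T)\,p_{|T|}$, which upon substituting $p_k=\tfrac{1}{r s_k}$ and using $\operatorname{ext}(T)=|\{j:T\in\Link{j}{\Delta}\}|$ becomes $\tfrac{1}{r}\bigl(\tfrac{|T|}{s_{|T|-1}}-\tfrac{\operatorname{ext}(T)}{s_{|T|}}\bigr)$. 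This is precisely \eqref{eq:shapley-efficiency-condition}.

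The reverse implication is where the real work lies, and I expect it to be the main obstacle. Assuming $\Delta$ is $\mathbf s$-Shapley, the symmetric form already holds, but the single surviving equation $\sum_k s_k p_k=1$ leaves the $p_k$ far from determined; the content of the theorem is that the efficiency scenario pins them down. I would again expand $\sum_i\phi_i(v)$ by Theorem \ref{thm:generic-efficiency} with the general constants $p_k$, and equate the resulting coefficients to those of \eqref{eq:shapley-efficiency-condition}. Writing $\delta_k\define p_k-\tfrac{1}{r s_k}$, the facet equation gives $r\,p_{r-1}=\tfrac{1}{s_{r-1}}$, i.e. $\delta_{r-1}=0$, while the non-facet equation for a face $T$ of cardinality $t$ rearranges to $t\,\delta_{t-1}=\operatorname{ext}(T)\,\delta_t$. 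Running this relation downward from $\delta_{r-1}=0$, and using that $\Delta$ (being pure of rank $r$) supplies faces of every cardinality $t=1,\dots,r-1$ to drive each step, one obtains $\delta_{r-2}=\dots=\delta_0=0$; hence $p_k=\tfrac{1}{r s_k}$ and $\phi_i=\operatorname{Shapley}^{\Delta}_i$. The delicate points to get right are that the induction genuinely closes—purity guarantees both the needed faces and that every facet has cardinality exactly $r$, so the facet and non-facet coefficient families in Theorem \ref{thm:generic-efficiency} are disjoint—and that the coefficient comparison is legitimate, which is why the spanning property of the carrier games is recorded at the outset.
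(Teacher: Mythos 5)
Your proposal is correct and follows essentially the same route as the paper: write each $\phi_i$ in the symmetric form $\sum_{T}p_{|T|}\left(v(T\cup i)-v(T)\right)$ via Theorems \ref{thm:la+da} and \ref{thm:s-a-unique-p-0}, compare coefficients to extract the $\mathbf{s}$-Shapley condition and the efficiency formula in the forward direction, and in the reverse direction match the efficiency scenario against Theorem \ref{thm:generic-efficiency} and run the recursion downward from $p_{r-1}=\frac{1}{r s_{r-1}}$. Your $\delta_k$ bookkeeping, and the explicit remark that purity of $\Delta$ supplies non-facet faces of every cardinality so the recursion closes, are just a cleaner write-up of the step the paper compresses into ``we recursively show that $p_t=\frac{1}{r}\frac{1}{s_t}$.''
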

\begin{proof}
	Using Theorem \ref{thm:la+da}, we can write each $\phi_i$ as 
	\[
		\phi_i(v)=\sum_{T\in \Link{i}{\Delta}}p_T^i \left(v(T\cup i) - v(T)\right)
	\]
	and by using Theorem \ref{thm:s-a-unique-p-0} we know that the real numbers $p_T^i$ depend only by the cardinality of the set \textbf{$|T|=t$}, so
	\[
		\phi_i(v)=\sum_{T\in \Link{i}{\Delta}}p_{|T|} \left(v(T\cup i) - v(T)\right).
	\]
	If $\phi_i=\operatorname{Shapley}_i^{\Delta}$ for every $i$, then $p_t^i=\nicefrac{1}{r\ffi{t-1}{\Link{i}{\Delta}}}$; hence $\nicefrac{1}{r\ffi{t-1}{\Link{i}{\Delta}}}=\nicefrac{1}{r\ffi{t-1}{\Link{j}{\Delta}}}$ for every distinct pair of player and, then $\Delta$ is $\mathbf{s}$-Shapley with $s_k=\ffi{k-1}{\Link{i}{\Delta}}$.
		
	By using Theorem \ref{thm:generic-efficiency}, it is easy to see that if the group value $\phi$ is made of individual values of the form \eqref{eq:Shapley-def-simplicial-complex}, then $\sum_{i\in [n]}\operatorname{Shapley}_i^{\Delta}(v)$ is the number the one expressed in \eqref{eq:shapley-efficiency-condition}.
	
	On the contrary direction, assume $\Delta$ is $\textbf{s}$-Shapley and that \eqref{eq:shapley-efficiency-condition} holds.
	As we mention right before the statement of the theorem, \eqref{eq:Shapley-def-simplicial-complex} reduces to
	\[
			\operatorname{Shapley}^{\Delta}_i(v)= \sum_{T\in \Link{i}{\Delta}} \frac{1}{r} \frac{1}{s_{|T|}} (v(T\cup i)-v(T)).
	\]
	Consider the cooperative game $\mathbbm{1}_T=v_T-\hat{v}_T$, so $\phi_i(\mathbbm{1}_T)=\phi_i(v_T-\hat{v}_T)$.
	Hence, if $F$ is a facet for $\Delta$, then
	$$\sum_{i\in [n]}\phi_i^{\Delta}(\mathbbm{1}_T)=	\sum_{i\in F}p_{|F|-1},$$
	Now, by hypothesis $\sum_{i\in F}p_{|F|-1}=\frac{1}{s_{r}}$ and so $p_{r-1}=p_{|F|-1}=\frac{1}{r}\frac{1}{s_{r}}$.
	
	If $T$ is not a facet then
	$$\sum_{i\in [n]}\phi_i^{\Delta}(\mathbbm{1}_T)=\sum_{i\in [n]}\phi_i^{\Delta}(v_T)-\sum_{i\in [n]}\phi_i^{\Delta}(\hat{v}_T)=	\sum_{i\in T}p_{|T|-1}-\sum_{j\in \operatorname{Ext}(T)}p_{|T|}$$
	and, therefore, by hypothesis
	$$|T|p_{|T|-1}-\operatorname{ext}(T)p_{|T|}=\frac{1}{r}\left(\frac{|T|}{s_{|T|-1}} -\frac{\operatorname{ext}(T)}{s_{|T|}}\right).$$
	Using that $p_{r-1}=\frac{1}{r}\frac{1}{s_{r}}$, we recursively show that $p_{t}=\frac{1}{r}\frac{1}{s_{t}}$.
\end{proof}

%
%
%
%
%

\section{Decomposing the Shapley value}\label{sec:decomposing}
In this section, we are going to study when the Shapley value 
$\operatorname{Shapley}^{\Delta}_i$
can be written as a weighted sum of classical Shapley values. Bilbao, Driessen, Jim\'{e}nez Losada and Lebr\'{o}n \cite{Shapley-matroids-static} characterized when individual values can be written as weighted sum of classical Shapley values defined on the maximal facets of a matroids \cite{Shapley-matroids-static}; the author has generalized this result to simplicial complex in Theorem 5.1 of \cite{Martino-cooperative}.
%
It is interesting to connect these two axiomatizations.

To do this, let $F$ be a facet of $\Delta$ and we denote by $\operatorname{Shapley}^{F}_i(v)$ the classical Shapley value for the cooperative game $v_{|F}$, that is a cooperative game on $|F|$ players where $v_{|F}(S)=v(S)$ for any subset $S$ of $F$.
Consider the following individual value
\[
	\operatorname{S}^{\Delta}_i(v)=\sum_{F\in\Facet{i}{\Delta}} c_F \operatorname{Shapley}^{F}_i(v_{|F})
\]
where $\{c_F\}$ is a subset of real numbers.

We start by substituting in the left hand side the expression \eqref{eq:shapley-classical-expression}: 
\begin{eqnarray*}
\operatorname{S}^{\Delta}_i(v)&=&\sum_{F\in\Facet{i}{\Delta}} c_F \operatorname{Shapley}^{F}_i(v_{|F})\\
&=&\sum_{F\in\Facet{i}{\Delta}} c_F  \sum_{T\subseteq F\setminus i} \frac{1}{|F|}\frac{|T|!(|F|-|T|-1)!}{(|F|-1)!} (v(T\cup i)-v(T))\\
&=&\sum_{T\in \Link{i}{\Delta}} 
\left(\sum_{F\in \Facet{T\cup i}{\Delta}} c_F\right)
\frac{1}{|F|}\frac{|T|!(|F|-|T|-1)!}{(|F|-1)!} (v(T\cup i)-v(T)).
\end{eqnarray*}
Now we can compare the coefficient of $(v(T\cup i)-v(T))$ in the previous equation and the one in the definition of $\operatorname{Shapley}^{\Delta}_i(v)$, see \eqref{eq:Shapley-def-simplicial-complex}, and we assume they are equal:
\[
	\frac{1}{\rank{\Link{i}{\Delta}}}\frac{1}{f_{|T|-1}(\Link{i}{\Delta})}= \left(\sum_{F\in \Facet{T\cup i}{\Delta}} c_F\right)
\frac{1}{|F|}\frac{|T|!(|F|-|T|-1)!}{(|F|-1)!}
\]
and one gets:
\begin{equation}\label{eq:coefficients-comparing-shapley}
	\sum_{F\in \Facet{T\cup i}{\Delta}} c_F 
\frac{\rank{\Link{i}{\Delta}}}{|F|} \frac{f_{|T|-1}(\Link{i}{\Delta})}{{|F|-1 \choose |T|}}=1
\end{equation}

Now, given the simplicial complex, the cardinality of the facets $F$, the rank of each link, the binomial coefficient ${|F|-1 \choose |T|}$ can be treated as constant terms and we denote $$\tilde{c_F}\define c_F 
\frac{\rank{\Star{i}{\Delta}}}{|F| {|F|-1 \choose |T|}}.$$ 

\begin{remark}
	The number $c_F$ is positive if and only if $\tilde{c_F}$ is positive.
\end{remark}


\begin{remark}
	Equation \eqref{eq:coefficients-comparing-shapley} shows two interesting factors:
	\begin{itemize}
		\item $\frac{\rank{\Star{i}{\Delta}}}{|F|}$ describing how far each link is to have the same rank of the full simplex; and
		\item $\frac{\ffi{|T|-1}{\Link{i}{\Delta}}}{{|F|-1 \choose |T|}}$ that describes the discrepancy, level by level, from $\Link{i}{\Delta}$ and a full simplex of the same rank.
	\end{itemize}
\end{remark}

Thus, we have obtained the following equation for every $T\in \Link{i}{\Delta}$: 
\[
	\sum_{F\in \Facet{T\cup i}{\Delta}} \tilde{c_F} f_{|T|-1}(\Link{i}{\Delta})=1.
\]

\begin{theorem}\label{thm:decomposing}
		Let $\Delta$ be a simplicial complex of rank $r$ and let $\mathfrak{I}$ be a cone of cooperative game defined on $\Delta$.
		
		Let $\operatorname{S}^{\Delta}_i(v)$ be individual value written as weighed sum of classical Shapley value, that is, 
		\[
			\operatorname{S}^{\Delta}_i(v)=\sum_{F\in\Facet{i}{\Delta}} c_F \operatorname{Shapley}^{F}_i(v_{|F})
		\]
		where $c_F$ is a real numbers, and where we denote by $\operatorname{Shapley}^{F}_i(v)$ the classical Shapley value for the cooperative game $v_{|F}$ for every facets $F$ of $\Delta$. 

	The Shapley value $\operatorname{Shapley}^{\Delta}_i(v)$ can be written as $S_i^{\Delta}$ if and only if for every $T\in \Link{i}{\Delta}$, there exist a set of real numbers $\{\tilde{c_F}\}_{F\in \Facet{T\cup i}{\Delta}}$ such that 
    \[
	    \sum_{F\in \Facet{T\cup i}{\Delta}} \tilde{c_F} f_{t-1}(\Link{i}{\Delta})=1.
    \]

	The coefficients $c_F$ are defined as
	\[
		c_F = \tilde{c_F} 
\frac{|F| {|F|-1 \choose |T|}}{\operatorname{rank}\Star{i}{\Delta}}.
	\]
\end{theorem}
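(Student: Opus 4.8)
The plan is to recognize both $\operatorname{Shapley}^{\Delta}_i(v)$ and $\operatorname{S}^{\Delta}_i(v)$ as linear functionals of $v$ expressed against one common family of ``building blocks'' --- the marginal contributions $\bigl(v(T\cup i)-v(T)\bigr)$ for $T\in\Link{i}{\Delta}$ --- and then to prove that this family is linearly independent, so that the two functionals coincide exactly when their coefficients agree block by block. The chain of equalities displayed just before the statement then \emph{is} the resulting comparison, and the definition of $\tilde{c_F}$ merely repackages it.

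Concretely, I would first substitute the classical formula \eqref{eq:shapley-classical-expression} into $\operatorname{S}^{\Delta}_i(v)=\sum_{F\in\Facet{i}{\Delta}}c_F\operatorname{Shapley}^{F}_i(v_{|F})$ and interchange the order of summation. For a facet $F\in\Facet{i}{\Delta}$ the index $T$ runs over $T\subseteq F\setminus i$, which is equivalent to $T\in\Link{i}{\Delta}$ together with $F\in\Facet{T\cup i}{\Delta}$; swapping the two sums rewrites $\operatorname{S}^{\Delta}_i(v)$ as $\sum_{T\in\Link{i}{\Delta}}\bigl(v(T\cup i)-v(T)\bigr)\sum_{F\in\Facet{T\cup i}{\Delta}}c_F\,\tfrac{|T|!(|F|-|T|-1)!}{|F|!}$. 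I would keep the $|F|$-dependent weight inside the inner sum, since distinct facets through $T\cup i$ may have distinct cardinalities. The definition \eqref{eq:Shapley-def-simplicial-complex} already writes $\operatorname{Shapley}^{\Delta}_i(v)$ in the same shape, so both functionals are now combinations of the identical family $\{v(T\cup i)-v(T)\}_{T\in\Link{i}{\Delta}}$.

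The crux --- and the step that legitimizes everything else --- is the linear independence of this family as functionals on $\charFun$. Writing any such functional uniquely as $v\mapsto\sum_{S\in\Delta}a_S v(S)$, I would read off the coefficient of $v(T\cup i)$: among the blocks only the $T$-th one contributes a $+v(T\cup i)$ term (the sets $T\cup i$ are distinct and all contain $i$, whereas every $v(T')$ term involves a set $T'\in\Link{i}{\Delta}$ not containing $i$), so no cancellation occurs and the coefficient of $v(T\cup i)$ equals precisely the weight of the $T$-th block. Hence the two expansions are the same functional if and only if those weights match for every $T\in\Link{i}{\Delta}$, which is exactly the identity \eqref{eq:coefficients-comparing-shapley}.

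It then remains to rephrase \eqref{eq:coefficients-comparing-shapley}. Using $\tfrac{|T|!(|F|-|T|-1)!}{|F|!}=\tfrac{1}{|F|\binom{|F|-1}{|T|}}$ and setting $\tilde{c_F}\define c_F\,\tfrac{\rank{\Star{i}{\Delta}}}{|F|\binom{|F|-1}{|T|}}$ absorbs the factors coming from $|F|$, the binomial, and the rank of the star, turning \eqref{eq:coefficients-comparing-shapley} into $\sum_{F\in\Facet{T\cup i}{\Delta}}\tilde{c_F}\,f_{|T|-1}(\Link{i}{\Delta})=1$ for each $T\in\Link{i}{\Delta}$. Since this substitution is invertible, the stated formula $c_F=\tilde{c_F}\,\tfrac{|F|\binom{|F|-1}{|T|}}{\rank{\Star{i}{\Delta}}}$ yields the converse: from any $\{\tilde{c_F}\}$ solving the displayed equations one recovers coefficients $c_F$ for which every block-weight of $\operatorname{S}^{\Delta}_i$ matches that of $\operatorname{Shapley}^{\Delta}_i$, and linear independence then forces $\operatorname{S}^{\Delta}_i=\operatorname{Shapley}^{\Delta}_i$. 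I expect the linear-independence argument of the third paragraph to be the only genuinely load-bearing point; the remaining manipulations are bookkeeping of binomial and rank factors.
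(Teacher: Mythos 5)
Your proof is correct and takes essentially the same approach as the paper's: substitute the classical formula \eqref{eq:shapley-classical-expression}, interchange the sums over $F$ and $T$ so that both $\operatorname{S}^{\Delta}_i$ and $\operatorname{Shapley}^{\Delta}_i$ become expansions in the marginal contributions $\bigl(v(T\cup i)-v(T)\bigr)$ for $T\in\Link{i}{\Delta}$, equate coefficients, and repackage the resulting identity \eqref{eq:coefficients-comparing-shapley} via $\tilde{c_F}$. Your explicit linear-independence argument for the family $\{v\mapsto v(T\cup i)-v(T)\}_{T\in\Link{i}{\Delta}}$ (and your care in keeping the $|F|$-dependent weight inside the inner sum) merely makes rigorous the coefficient comparison that the paper performs implicitly.
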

\begin{proof}
	One implication was already proven in the preparation of the theorem. The converse is shown by reading back the chain of equality.
\end{proof}

\begin{remark}
If the $c_F$'s are treated as probabilities, then we have the further constrains that $\sum_{F\in \Facet{i}{\Delta}} c_F =1$ and $c_F\geq 0$.
\end{remark}

 	\vspace{0.5cm}

\bibliographystyle{amsalpha}
\bibliography{unica}

\newcommand{\etalchar}[1]{$^{#1}$}
\def\cprime{$'$}
\providecommand{\bysame}{\leavevmode\hbox to3em{\hrulefill}\thinspace}
\providecommand{\MR}{\relax\ifhmode\unskip\space\fi MR }
\providecommand{\MRhref}[2]{%
  \href{http://www.ams.org/mathscinet-getitem?mr=#1}{#2}
}
\providecommand{\href}[2]{#2}
\begin{thebibliography}{BDJLL02}

\bibitem[AB16]{MR3848666}
Andr\'{e}s Angel and Jerson Borja, \emph{Simplicial complexes and the
  evasiveness conjecture}, Grad. J. Math. \textbf{1} (2016), no.~1, 1--8.
  \MR{3848666}

\bibitem[BBM{\etalchar{+}}15]{OnPixel}
Sebastian Bach, Alexander Binder, Grégoire Montavon, Frederick Klauschen,
  Klaus-Robert Müller, and Wojciech Samek, \emph{On pixel-wise explanations
  for non-linear classifier decisions by layer-wise relevance propagation},
  PLOS ONE \textbf{10} (2015), no.~7, 1--46.

\bibitem[BDJLL01]{Shapley-matroids-static}
J.~M. Bilbao, T.~S.~H. Driessen, A.~Jim\'{e}nez~Losada, and E.~Lebr\'{o}n,
  \emph{The {S}hapley value for games on matroids: the static model}, Math.
  Methods Oper. Res. \textbf{53} (2001), no.~2, 333--348. \MR{1842713}

\bibitem[BDJLL02]{Shapley-matroids-dynamic}
J.~M. Bilbao, T.~S.~H. Driessen, A.~Jim\'{e}nez-Losada, and E.~Lebr\'{o}n,
  \emph{The {S}hapley value for games on matroids: the dynamic model}, Math.
  Methods Oper. Res. \textbf{56} (2002), no.~2, 287--301. \MR{1938216}

\bibitem[Bj{\"{o}}95]{MR1373690}
A.~Bj{\"{o}}rner, \emph{Topological methods}, Handbook of combinatorics, {V}ol.
  1, 2, Elsevier Sci. B. V., Amsterdam, 1995, pp.~1819--1872. \MR{1373690}

\bibitem[BM12]{MR2872540}
Jonathan~Ariel Barmak and Elias~Gabriel Minian, \emph{Strong homotopy types,
  nerves and collapses}, Discrete Comput. Geom. \textbf{47} (2012), no.~2,
  301--328. \MR{2872540}

\bibitem[BM19]{Borzi-Martino-D-matroids}
Alessio Borz{\`i} and Ivan Martino, \emph{Set of independencies and tutte
  polynomial of matroids over a domain}, \arxiv{1909.00332}, 2019.

\bibitem[BM20]{Borzi-Martino-DVR-matroids}
\bysame, \emph{On realizability of matroids over discrete valuation rings}, in
  preparation, 2020.

\bibitem[DM13]{MR2989987}
Michele D'Adderio and Luca Moci, \emph{Arithmetic matroids, the {T}utte
  polynomial and toric arrangements}, Adv. Math. \textbf{232} (2013), 335--367.
  \MR{2989987}

\bibitem[DSZ16]{7546525}
A.~{Datta}, S.~{Sen}, and Y.~{Zick}, \emph{Algorithmic transparency via
  quantitative input influence: Theory and experiments with learning systems},
  2016 IEEE Symposium on Security and Privacy (SP), May 2016, pp.~598--617.

\bibitem[DW92]{MR1164708}
Andreas W.~M. Dress and Walter Wenzel, \emph{Valuated matroids}, Adv. Math.
  \textbf{93} (1992), no.~2, 214--250. \MR{1164708}

\bibitem[FM19]{MR3883211}
Alex Fink and Luca Moci, \emph{Polyhedra and parameter spaces for matroids over
  valuation rings}, Adv. Math. \textbf{343} (2019), 448--494. \MR{3883211}

\bibitem[FV11]{MR2825616}
Ulrich Faigle and Jan Voss, \emph{A system-theoretic model for cooperation,
  interaction and allocation}, Discrete Appl. Math. \textbf{159} (2011),
  no.~16, 1736--1750. \MR{2825616}

\bibitem[Hir19]{MR3944531}
Hiroshi Hirai, \emph{Uniform semimodular lattices and valuated matroids}, J.
  Combin. Theory Ser. A \textbf{165} (2019), 325--359. \MR{3944531}

\bibitem[KSS84]{MR779890}
Jeff Kahn, Michael Saks, and Dean Sturtevant, \emph{A topological approach to
  evasiveness}, Combinatorica \textbf{4} (1984), no.~4, 297--306. \MR{779890}

\bibitem[LC01]{Lipovetsky}
Stan Lipovetsky and Michael Conklin, \emph{Analysis of regression in game
  theory approach}, Applied Stochastic Models in Business and Industry
  \textbf{17} (2001), 319 -- 330.

\bibitem[LL17]{NIPS2017_7062}
Scott~M Lundberg and Su-In Lee, \emph{A unified approach to interpreting model
  predictions}, Advances in Neural Information Processing Systems 30 (I.~Guyon,
  U.~V. Luxburg, S.~Bengio, H.~Wallach, R.~Fergus, S.~Vishwanathan, and
  R.~Garnett, eds.), Curran Associates, Inc., 2017, pp.~4765--4774.

\bibitem[Lut01]{MR1809429}
Frank~H. Lutz, \emph{Some results related to the evasiveness conjecture}, J.
  Combin. Theory Ser. B \textbf{81} (2001), no.~1, 110--124. \MR{1809429}

\bibitem[Lut02]{MR1871694}
F.~H. Lutz, \emph{Examples of {$\Bbb Z$}-acyclic and contractible
  vertex-homogeneous simplicial complexes}, vol.~27, 2002, Geometric
  combinatorics (San Francisco, CA/Davis, CA, 2000), pp.~137--154. \MR{1871694}

\bibitem[Mar18]{Martino2018}
Ivan Martino, \emph{Face module for realizable {$\Bbb Z$}-matroids}, Contrib.
  Discrete Math. \textbf{13} (2018), no.~2, 74--87. \MR{3897225}

\bibitem[Mar20a]{Martino-cooperative}
\bysame, \emph{Cooperative games on simplicial complexes}, \arxiv{2001.00366},
  2020.

\bibitem[Mar20b]{Martino-Efficiency}
\bysame, \emph{Efficiency for cooperative games}, \arxiv{2001.00779}, 2020.

\bibitem[MTMZ19]{MR3886659}
Fanyong Meng, Jie Tang, Beiling Ma, and Qiang Zhang, \emph{Proportional
  coalition values for monotonic games on convex geometries with a coalition
  structure}, J. Comput. Appl. Math. \textbf{348} (2019), 34--47. \MR{3886659}

\bibitem[Mur96]{MR1433646}
Kazuo Murota, \emph{On exchange axioms for valuated matroids and valuated
  delta-matroids}, Combinatorica \textbf{16} (1996), no.~4, 591--596.
  \MR{1433646}

\bibitem[MZ11]{MR2847360}
Fanyong Meng and Qiang Zhang, \emph{The fuzzy core and {S}hapley function for
  dynamic fuzzy games on matroids}, Fuzzy Optim. Decis. Mak. \textbf{10}
  (2011), no.~4, 369--404. \MR{2847360}

\bibitem[NZKI97]{MR1436577}
Hiroshi Nagamochi, Dao-Zhi Zeng, Naohisa Kabutoya, and Toshihide Ibaraki,
  \emph{Complexity of the minimum base game on matroids}, Math. Oper. Res.
  \textbf{22} (1997), no.~1, 146--164. \MR{1436577}

\bibitem[Oxl11]{Oxley}
James Oxley, \emph{Matroid theory}, second ed., Oxford Graduate Texts in
  Mathematics, vol.~21, Oxford University Press, Oxford, 2011. \MR{2849819}

\bibitem[Rin12]{MR2844079}
Felipe Rinc\'{o}n, \emph{Isotropical linear spaces and valuated
  {D}elta-matroids}, J. Combin. Theory Ser. A \textbf{119} (2012), no.~1,
  14--32. \MR{2844079}

\bibitem[RSG16]{Ribeiro}
Marco Ribeiro, Sameer Singh, and Carlos Guestrin, \emph{“why should i trust
  you?”: Explaining the predictions of any classifier}, 02 2016, pp.~97--101.

\bibitem[RV76]{MR0460172}
Ronald~L. Rivest and Jean Vuillemin, \emph{On the time required to recognize
  properties of graphs from their adjacency matrices}, Journ\'{e}es
  algorithmiques (\'{E}cole {N}orm. {S}up., {P}aris, 1975), 1976, pp.~213--227.
  Ast\'{e}risque, No. 38--39. \MR{0460172}

\bibitem[Sha53]{Shapley-a-value}
L.~S. Shapley, \emph{A value for {$n$}-person games}, Contributions to the
  theory of games, vol. 2, Annals of Mathematics Studies, no. 28, Princeton
  University Press, Princeton, N. J., 1953, pp.~307--317. \MR{0053477}

\bibitem[Sha72]{Shapley-core-convex}
Lloyd~S. Shapley, \emph{Cores of convex games}, Internat. J. Game Theory
  \textbf{1} (1971/72), 11--26; errata, ibid. 1 (1971/72), 199. \MR{311338}

\bibitem[SK13]{Erik-Igor}
Erik Strumbelj and Igor Kononenko, \emph{Explaining prediction models and
  individual predictions with feature contributions}, Knowledge and Information
  Systems \textbf{41} (2013), 647--665.

\bibitem[SPK17]{Shrikumar}
Avanti Shrikumar, Greenside Peyton, and Anshul Kundaje, \emph{Learning
  important features through propagating activation differences},
  arXiv:1704.02685, 2017.

\bibitem[Sta84]{MR782306}
Richard~P. Stanley, \emph{An introduction to combinatorial commutative
  algebra}, Enumeration and design ({W}aterloo, {O}nt., 1982), Academic Press,
  Toronto, ON, 1984, pp.~3--18. \MR{782306}

\bibitem[Sta96]{Stanley1996a}
\bysame, \emph{Combinatorics and commutative algebra}, second ed., Progress in
  Mathematics, vol.~41, Birkh\"auser Boston, Inc., Boston, MA, 1996.
  \MR{1453579}

\bibitem[Sta12]{Stanley2012b}
\bysame, \emph{Enumerative combinatorics. {V}olume 1}, second ed., Cambridge
  Studies in Advanced Mathematics, vol.~49, Cambridge University Press,
  Cambridge, 2012. \MR{2868112}

\bibitem[Web88]{Weber-robabilistic-values-for-games}
Robert~James Weber, \emph{P}, The {S}hapley value, Cambridge Univ. Press,
  Cambridge, 1988, pp.~101--119. \MR{989825}

\bibitem[Wel99]{MR1710494}
Volkmar Welker, \emph{Constructions preserving evasiveness and collapsibility},
  Discrete Math. \textbf{207} (1999), no.~1-3, 243--255. \MR{1710494}

\bibitem[Whi35]{Whi35}
Hassler Whitney, \emph{On the {A}bstract {P}roperties of {L}inear
  {D}ependence}, Amer. J. Math. \textbf{57} (1935), no.~3, 509--533.

\bibitem[Yao88]{MR941942}
Andrew Chi-Chih Yao, \emph{Monotone bipartite graph properties are evasive},
  SIAM J. Comput. \textbf{17} (1988), no.~3, 517--520. \MR{941942}

\bibitem[Zha99]{MR1707975}
Jingang Zhao, \emph{A necessary and sufficient condition for the convexity in
  oligopoly games}, Math. Social Sci. \textbf{37} (1999), no.~2, 189--204.
  \MR{1707975}

\end{thebibliography}

 	\vspace{0.5cm}
	
 	\noindent
 	{\scshape Ivan Martino}\\
 	{\scshape Department of Mathematics, Royal Institute of Technology.}\\ 
 	{\itshape E-mail address}: \texttt{imartino@kth.se}

\end{document}